\newcommand{\R}{\mathbb{R}}
\DeclareMathOperator{\Ric} {Ric}
\DeclareMathOperator{\Id} {Id}
\newtheorem{theorem}{Theorem}
\newtheorem{lemma}{Lemma}
\newtheorem{corollary}{Corollary}
\newtheorem{remark}{Remark}
\newtheorem{definition}{Definition}
\newtheorem{hypothesis}{Hypothesis}
\newcommand{\F}{\mathcal{F}}
\newcommand{\E}{\mathbb{E}}
\newcommand{\mathd}{\mathrm{d}}
\newcommand{\assign}{:=}
\newcommand{\tmop}[1]{\ensuremath{\operatorname{#1}}}
\title{Continuous sparse domination and dimensionless weighted estimates for the Bakry Riesz vector} 
\author{K. Domelevo, S. Petermichl\thanks{partially supported by the Alexander von Humboldt Stiftung and ERC grant CHRiSHarMa no. DLV-682402}, K.A. \v Skreb\thanks{partially supported by Croatian Science Foundation project UIP-2017-05-4129 (MUNHANAP) and ERC grant CHRiSHarMa no. DLV-682402}} 
\date{}
\begin{document} 

\maketitle
\begin{abstract}
We present a fundamentally new proof of the dimensionless $\textup{L}^p$ boundedness of the Bakry-Riesz vector on manifolds with bounded geometry. Our proof has the significant advantage that it allows for a much stronger conclusion than previous arguments, namely that of some new dimensionless weighted estimates with optimal exponent. Part of the importance of this task lies in the novelty of the techniques: we develop the self-similarity argument known as `sparse domination' in the setting of uniformly integrable c\`adl\`ag  Hilbert space valued martingales and extend the domination to a process with infinite memory. We provide a range of optimal weighted estimates and weak type estimates for these stochastic processes. Previous geometric Riesz transform estimates relied on Bellman functions and did not provide this range of weighted estimates. The development of sparse domination in this probabilistic setting and its use for high dimensional problems is new. 
\end{abstract}

\section{Introduction}

\subsection{A short summary}
In this paper, we are primarily interested in dimensionless weighted and unweighted norm estimates of the Riesz vector on manifolds as well as related stochastic processes that are used to model these operators. These are  loosely tied to the concept of differential subordination. 
The ties of pairs of martingales in differential subordination to harmonic analysis have a long history and have proven influential, 
especially with ambitious goals such as very precise, sharp norm estimates. 
 Certain classical operators in harmonic analysis, such as the Euclidean Riesz transforms can be written as a conditional expectation of
certain martingale transforms such as done in Gundy-Varopoulos {\cite{GV1979}}. Other, deep connections 
of a probabilistic flavour have been established long ago in the work by Bourgain and Burkholder \cite{Bou} \cite{Bur}. More surfaced in the last ten to twenty years by one of the authors {\cite{Pet2000b}}. %Later Hytonen {\cite{Hy}} extended these new ideas to great generality. 
Especially in the presence of negative curvature, the use of randomness is natural {\cite{Emery}}. Other than previous arguments, only a small part of our proof is based on special auxiliary (Bellman) functions, while the core of the argument is a weak type estimate and a very particular continuous-time sparse domination of the stochastic process by X.D. Li \cite{Li2008}, whose projection is the Bakry Riesz vector {\cite{Bakry1985}}, {\cite{Bakry1987}} of a given function.

Therefore, the aim of this paper is two fold, there are advances in harmonic analysis in a geometric setting as well as in probability theory:

\begin{itemize}

\item The results in analysis concern dimensionless estimates for the so called Bakry Riesz vector on Riemannian manifolds with their Bakry-Emery curvature bounded below. In the non-negative curvature case and for all $L^p$, $1<p< \infty$ it is shown that there holds even in the weighted setting, a dimension-free bound. The weighted bound was previously only known for $p=2$, with a completely different proof that has no hopes to allow for an extension to other $p$. The estimate we obtain is sharp, in the sense that the exponent on the weight characteristic cannot be improved. (See the Euclidean case \cite{KT}). In the negative curvature case we recover known unweighted results by \cite{BO}, \cite{CD} with a markedly different proof, avoiding Bellman functions almost entirely (We use and prove a weak type estimate that requires a Bellman function).  Throughout, we apply a novel approach via continuous sparse domination instead of Bellman functions. This is the first use of sparse domination using continuous-time probabilistic flow techniques with the goal to obtain structural information on objects central to high dimensional harmonic analysis.

\item Our results on the probabilistic side center around a development of sparse domination using a continuous time parameter. This results in sharp weighted estimates for relevant stochastic processes, even in the presence of jumps. The simplest estimate we present is a weighted maximal estimate for processes with jumps, a question raised in the 70s \cite{BL}. It is not difficult, with today's techniques, to answer this question positively. We move on to an optimal weighted bound for the maximal function of a martingale $Y$ with respect to $X$, a pair of differentially subordinate martingales. In the case of dyadic martingales with underlying Lebesgue measure, the result is due to Wittwer \cite{W} and was considered a difficult accomplishment at that time. More issues arise when the regularity of the dyadic system is lost. In this general discrete time case, such results are fairly recent by Thiele-Treil-Volberg \cite{TTV} (without maximal inequalities) and Lacey \cite{Lacey2015} (gives maximal inequalities using the powerful novel technique of sparse domination). In the case of continuous time but under minimal regularity assumptions on the paths, the best previously known result by two of us uses a rather involved Bellman function, has a technically demanding proof \cite{DP1} and cannot treat maximal inequalities. In this current paper, we provide a sparse domination and weighted estimate for the maximal function of $Y$ in greatest generality. (This martingale model via the pair $X$ and $Y$ suffices to deduce Riesz vector estimates in the Euclidean case but is insufficient for Riesz vectors on general manifolds). The development of a sparse domination in this continuous time case is new, but the trained probabilist might be able to develop it from the ideas of Lacey \cite{Lacey2015} with the appropriate changes, recalling that jump processes can be a mine field. %For instance, the approach in \cite{TTV} is much too rigid to handle continuous time. 
In the current situation, though, one needs a model $Z$ for the Riesz vector that is merely a semi-martingale with an infinite memory component defined via a stochastic ODE. The stochastic process associated to $f$ is a semi-martingale of the form $X+A$ involving a martingale $X$ and a finite variation process $A$, and the process $Z$ is directly related to a martingale transform $Y$ of $X$, where $Y$ is differentially subordinate to $X$ via the square bracket process.
It appears that $Z$ is a process with infinite memory, therefore incompatible with the standard sparse domination. We introduce a new, more subtle domination, preserving both sparsity and self similarity.
We still get  the needed weak type estimate and sparse domination for $Z$ and therefore maximal weighted bounds for all $p$, even in the presence of jumps. The weighted bounds as well as the sparse domination for $Z$ are the main results in the probability section of this paper.

\end{itemize}

\subsection{Detailed introduction}

In the Euclidian space, the $i$--th Riesz transfrom in $\R^n$ is defined as
\[
        R_i = \frac{\partial}{\partial x_i} (-\Delta)^{-1/2},
\]
where $\Delta = \sum_{i=1}^n \partial^2 / \partial_{x_i}^2$ is the usual Laplacian in $\R^n$. The vector Riesz transform $\vec{R}$ is defined as the vector valued operator $\vec{R}=(R_1,R_2,\ldots,R_n)$. In the one-dimensional setting, the Riesz transform is nothing but the Hilbert transform. The $\textup{L}^p$ estimate of the Hilbert transform on the real line dates back to the work of Riesz \cite{Riesz1927} and Pichorides \cite{Pic1972}. Regarding the $\textup{L}^p$ estimate of the Riesz vector in $\R^n$, see \cite{Stein1955, Meyer1984, Pisier1988, BanWan1995, IwaMar1996, DraVol2006}. The knowledge of the exact value (or at least a good estimate) of the $p$-norm of the Riesz transforms on $\R^n$ is a recent matter whose importance appears in the theory of quasiconformal mappings and related PDEs.

A corner stone in this line of results is the stochastic representation of Riesz transfroms in $\R^n$ by Gundy-Varopoulos \cite{GV1979}. To this end, these authors define the so-called background noise which are, morally, Brownian trajectories in the upper half space started at infinity and stopped when hitting the boundary. To a given function $f$ defined on $\R^n$ and its Poisson extension in the upper half space, these authors associate a natural martingale $M^f$. They prove that the Riesz transforms can be written as a suitable conditional expectation of martingale transforms of $M^f$. This representation was extended to the Riemannian manifold framework by X.-D. Li \cite{Li2008,LiErratum,LiArXiv}. Despite the loss of the martingale structure in the process that models the Riesz vector, it has thus enabled the first dimensionless estimates with the growth proportional to $(p-1)^{-1}$ when $p<2$ and $p-1$ when $p\geqslant 2$ in this setting \cite{BO}. A somewhat earlier complete proof via a different, deterministic semi-group method can be found in \cite{CD}. Both arguments are fairly recent Bellman function arguments, one in the probabilistic `strong form' and one formulated in the deterministic `weak' or `dualized form'.

For early considerations of $\textup{L}^p$ boundedness of Riesz transforms on manifolds, we refer to \cite{SteinTopics}. We mention also the works \cite{Meyer1984,Bakry1985,Gundy1986,Bakry1987,Pisier1988,Arcozzi,BanBau2013} among which the papers of Bakry provide estimates of Riesz transforms for complete Riemannian manifolds under the general condition that the Bakry-Emery curvature is bounded below (see \cite{Emery}). 

In this paper, we recover the results mentioned above in \cite{CD} and \cite{BO}. Our proofs, however, use a novel technique and work in the weighted setting. In this light, we generalize the Euclidean weighted result obtained in \cite{DPW} by working on manifolds, and the one obtained in \cite{D} by extending the estimates to $\textup{L}^p(w)$, for all $p\in(1,\infty)$.
Weighted estimates of the type we obtain here tend to be much stronger than unweighted estimates. 

Usually, it is possible to extrapolate from a given exponent to other exponents (as opposed to interpolate in the unweighted case) but the classical result on extrapolation of Rubio de Francia cannot be applied here directly to the Riesz vector, because the manifold is endowed with a weighted measure $$\mathd\mu_\varphi(x)=e^{-\varphi(x)}\mathd\nu(x)$$ that is not necessarily doubling. It is also to be noted that with the semi-group approach such as in \cite{PW} and \cite{D}, extrapolation is also not useful, not even in the Euclidean case! (This situation changes in the model parabolic case, where the heat kernel presents much stronger decay.) Further it is known to the authors that the Bellman function developed in \cite{DP1} and used in \cite{D} does not have the convexity properties needed to handle negative curvature. An $L^p$ expression for this function is unfortunately unknown. We also mention the recent beautiful construction in \cite{BBO} that also handles the case of $p=2$ differently from \cite{DP1} and gives maximal estimates. It falls short of treating jump processes, but gives another valuable less complicated construction than that in \cite{DP1}.

Our proof is very different from those in previously appeared results in that it does not rely on a Bellman function for the problem.
We will thus use stochastic tools relying on the stochastic representation of Riesz transforms on manifolds by X.-D. Li \cite{Li2008,LiErratum,LiArXiv}. To this end, we use the Eells-Elworthy-Malliavin construction of Brownian motion on manifolds \cite{Mal1997, Elworthy}. It is defined as the projection on the manifold of the solution of an SDE (called horizontal Brownian motion), defined on the orthonormal frame bundle $\mathcal{O}(M)$. This construction allows to describe diffusion processes on manifolds and gives rise to the notion of parallel transport along the paths of the diffusion \cite{Elworthy, Hsu}.

For readers not familiar with sparse domination, we refer the reader to the elegant original argument in \cite{Lacey2015} for the first probabilistic object, a discrete time martingale transform. For ease of transition, we develop the argument rigorously in the continuous time martingale setting. Readers familiar with \cite{Lacey2015} but new to stochastic processes can use the proximity of these arguments to get acquainted. Both the required weak type estimate and in particular the sparse domination become more complicated when one loses the martingale structure. The process $Z$ by Li is defined via an auxiliary martingale $Y$ that is differentially subordinate to $X$, but $Z$ is not a martingale. (Here, $X$ models $f$ and an expectation of $Z$ models $Rf$.)  The process $Z$ is defined via $Z_0=0$ and evolves according to the stochastic ODE $$\mathd Z_t=(V_t-a\Id)Z_t \mathd t + \mathd Y_t.$$
(The sign of the operator $(V_t-a \Id)$ is negative and it will be explained later how it relates to the curvature of the space.) The ODE above defines a process with memory, while classical sparse type arguments are self similar and resist to dominating objects with memory. A new type of domination procedure comes into play to overcome this difficulty, relying on the specific form of the ODE. Another needed tool is a weak type estimate of the maximal operator of $Z$. This is the only part of our proof that uses a (simple) Bellman function. Similar processes have been considered by Ba\~nuelos-Osekowski \cite{BO}, in our estimate we need to consider also submartingales. The explicit form of the function is essential and not just its convexity and size properties. The first derivative of said Bellman function is used to control a drift term that arises because the process we consider is not a martingale. 

The sparse domination technique has witnessed considerable efforts in the last several years and has been used to prove numerous new results in harmonic analysis, using sparse operators defined on cubes. These cannot give dimensionless estimates and are often not adapted to the non-doubling case. Our use of stochastic processes and stopping times with a continuous parameter to tackle problems in geometric harmonic analysis is entirely new. One can deduce, in general, from such a domination a dimensionless bound. The sparse operators are particularly well suited for working with weights, which is why this so obtained dimensionless estimate also holds in the weighted setting.

\section{Bakry Riesz transforms on manifolds.}

Let $(M, g)$ be a complete Riemannian manifold with metric $g$ and of dimension
$n$. Let $\Delta$ be the non-positive Laplace-Beltrami operator, $\nabla$
the gradient operator, and $\nu$ the volume measure on $(M, g)$ such that
$$\mathd  \nu (x) = \sqrt{\det g (x)} \mathd  x.$$ Let moreover $\mu_{\varphi}$ be a
weighted volume measure on $M$ with $$\mathd  \mu_{\varphi} (x) = e^{- \varphi (x)} \mathd 
\nu (x),$$ where $\varphi (x) \in \mathcal{C}^2 (M)$. The weighted Laplacian
$\Delta_{\varphi}$ with respect to $\mu_{\varphi}$ on $M$ is defined for any function
$f$ by
\[ \Delta_{\varphi} f \assign \Delta f- \nabla \varphi \cdot \nabla f. \]

We assume that $\mu_{\varphi} (M) < \infty$ and by normalizing, we may assume without
loss of generality that $\mu_{\varphi}$ is actually a probability measure. The
Bakry-Emery curvature tensor associated with $\Delta_{\varphi}$ is defined by
\[ \tmop{Ric}_{\varphi} = \tmop{Ric} + \nabla^2 \varphi, \]
where $\tmop{Ric}$ denotes the Ricci curvature tensor on $M$ and $\nabla^2
\varphi$ the Hessian of $\varphi$ with respect to the Levi-Civita connection
on $(M, g)$.

All over this paper, we assume that the Bakry-Emery curvature is bounded from below, i.e. there exists a non-negative constant $a$ such that
\[
	\tmop{Ric}_{\varphi}  \geqslant -a.
\]
This condition ensures that the manifold is stochastically complete and hence Brownian motion does not explode in finite time. We denote by $\vec{R}_\varphi^a$ the Bakry Riesz transform defined as
\[
	\vec{R}_\varphi^a = \nabla \circ (a\Id -\Delta _{\varphi} )^{-1/2},
\]
where $\Id$ denotes the identity. When $a=0$, we will denote the Riesz transforms just by $\vec{R}_\varphi$. It was proved already by Bakry \cite{Bakry1987} that for any $p>1$ there exists a universal constant $C_p$ 
such that for any function $f\in C_0^\infty(M)$, there holds
\[
	\|\vec{R}_\varphi^a  f \|_{\textup{L}^p(\mu)} \leqslant C_p \| f\|_{\textup{L}^p(\mu)}.
\]
However, the estimate did not have the asymptotic in $p$ one would have expected. The more recent works mentioned earlier provided the correct growth in $p$ near the end points.

\subsection{Probabilistic representation of the Bakry Riesz vector on manifolds.}

For this application, the familiar setting of continuous path processes is sufficient. Readers not familiar are encouraged to draw the relevant definitions from the next chapter, where they are rigorously given in greater generality. For the time being, we recall that the martingale $Y$ is said to be \emph{differentially subordinated} to the martingale $X$ if the process $([ X,X]_t -  [Y,Y] _t)_{t\geqslant 0}$ is non-negative and non-decreasing in $t$, where the bracket $[ \cdot,\cdot ]$ denotes the usual quadratic covariation process for real or vector valued stochastic processes. 

Let $B^M_t$ be the diffusion process on $M$ with generator $\Delta_{\varphi}$ and initial distribution $\mu$ obeying the stochastic differential equation
\[
	\mathd B^M_t = U_t \mathd W_t - \nabla\varphi(B^M_t) \mathd t,
\]
where $W_t$ is the Brownian motion on $\R^n$ and $$U_t:T_{B_0^M}M\to T_{B_t^M}M$$ denotes the stochastic parallel transport on $M$ along the trajectory $\{  B^M_s  , 0\leqslant s \leqslant t\}$. Let further $B_t$ be the one-dimensional Brownian motion started at $y>0$ with the normalisation $E [(B_t)^2] = 2t$ such that its generator is $\partial^2/\partial y^2$.
Following \cite{Meyer1984,GV1979, Gundy1986}, there exists a diffusion process $$\tilde{B}_t=(B^M_t,B_t)$$ on $(M,\R^+)$ -- the so-called \emph{background radiation process} -- associated to the generator $\Delta_\varphi + \partial^2/\partial y^2$ and with the initial distribution $\mu \otimes \delta_y$.

Using a martingale approach, one can represent the Riesz vector $\vec{R}_\varphi^a$ via a probabilistic representation. In the literature, it first appeared in \cite{GV1979}, where the Riesz transform was defined on $\mathbb{R}^n$. In \cite{Arcozzi} Arcozzi extended this formula to compact Lie groups and spheres. In \cite{Li2008}, \cite{LiErratum} X.-D.~Li presented a new formula adapted to complete Riemannian manifolds. The representation formula of the Riesz vector in this setting for a complete manifold with $\Ric _{\varphi}\geqslant -a$ is as follows

\begin{equation} \label{rep}
-\frac{1}{2}(\vec{R} _{\varphi}^a f)(x)=\lim_{y\rightarrow \infty} \mathbb{E}_y\left[e^{-a \tau}M_{\tau}\int_0^{\tau} e^{a s} M_s^{-1} \nabla Q^a f(B^M_s,B_s)\mathd B_s | B^M_{\tau}=x \right],
\end{equation}
where $y$ is the starting point of the random walk and
\begin{itemize}
\item $Q^a f (x,y)=e^{-y\sqrt{a\Id-\Delta_{\varphi}}}f(x)$ is the Poisson extension;
\item $\tau = \inf \{t>0:B_t=0\}$ is the stopping time upon hitting the boundary of the upper half space;
\item $M_t$ is the solution to the matrix-valued stochastic differential equation
\[\mathd M_t=V_tM_t \mathd t, \ \ \ M_0=\Id,\]
for some adapted and continuous process $(V_t)_{t \geqslant 0}$ taking values in the set of symmetric and non-positive $n \times n$ matrices and arises from the geometry of the manifold.
\end{itemize} 

Equivalently, one can rewrite this formula as
\begin{equation}
-\frac{1}{2}(\vec{R}_\varphi^a f)(x)=\lim_{y\rightarrow \infty} \mathbb{E}_y\left[ Z_\tau|B^M_{\tau}=x \right],
\label{rep2}
\end{equation}
where $Z_t$ is a semi-martingale defined thanks to the auxiliary martingale $Y_t$ as follows
\begin{gather*}
	Y_t =  \int_0^t  \nabla Q^af(B^M_s,B_s) \mathd B_s, \\
	Z_t =  e^{-at}M_t \int_0^t e^{as}M_s^{-1} \mathd Y_s,
\end{gather*}
where $(Y_t)_{t \geqslant 0}$ is by construction differentially subordinate to $(X_t)_{t \geqslant 0}$ defined as 
$$X_t = Q^af(B_0^M,y)+\int_0^t (\nabla,\partial_y) Q^af(B^M_s,B_s)(U_s \mathd W_s, \mathd B_s). $$
We will also define the auxiliary process
\begin{equation*}
\widetilde{X}_t := Q^af(B^M_t,B_t)= X_t + a \int_0^t Q^af(B^M_s,B_s) \mathd s,
\end{equation*}
where the second equality follows by It\^o formula (as can be seen in \cite{Li2008}). By denoting
\begin{equation}\label{eq: A}
A_t=a \int_0^t Q^af(B^M_s,B_s) \mathd s,
\end{equation}
we can write
\[\widetilde{X}_t=X_t+A_t \]
and observe that $(\widetilde{X}_t)_{t \geqslant 0}$ is in fact a submartingale. Indeed, since $(A_t)_{t \geqslant 0}$ is an increasing FV process and $(X_t)_{t \geqslant 0}$ is a martingale, we have
\begin{align*}
\E(\widetilde{X}_t|\F_s)&=\E(X_t+A_t|\F_s) \geqslant \E(X_t+A_s|\F_s) \\
&=\E(X_t|F_s)+\E(A_s|\F_s)=X_s+A_s=\widetilde{X}_s.
\end{align*}

For the theorems we wish to prove, it will be sufficient to consider $f\geqslant 0$ and thus $X$, $\widetilde{X}\geqslant 0$.

\begin{remark}
For the non-negative curvature case, i.e. $a=0$, the representation formula \eqref{rep} reduces to 
\begin{equation*}
-\frac{1}{2}(\vec{R} _{\varphi} f)(x)=\lim_{y\rightarrow \infty} \mathbb{E}_y\left[M_{\tau}\int_0^{\tau} M_s^{-1} \nabla Q f(B^M_s,B_s)\mathd B_s | B^M_{\tau}=x \right],
\end{equation*}
where $Q f(x,y)=e^{-y\sqrt{-\Delta_{\varphi}}}f(x)$. In this case obviously $A_t=0$ and $X_t=\widetilde{X}_t$ for every $t\geqslant 0$. In particular, that means that the process $(\widetilde{X}_t)_{t\geqslant 0}$ is a martingale.
\end{remark}

\section{Probabilistic objects in the presence of jumps and Sparse operators}

Let $(\Omega, \mathcal{F}, \mathfrak{F}, \mathbbm{P})$ be a complete filtered
probability space with $\mathfrak{F}= (\mathcal{F}_t)_{t \geqslant 0}$ a
filtration that is right continuous, where $\mathcal{F}_0$ contains all
$\mathcal{F}$ null sets. Let $X$ and $Y$ be uniformly integrable adapted
c{\`a}dl{\`a}g martingales with values in a separable Hilbert space that are
in a relation of differential subordination according to Ba\~nuelos-Wang \cite{BanWan1995} and Wang \cite{Wang} in the continuous time setting, and earlier Burkholder \cite{Bur1984a,Bur1988b,Bur1991a} in the discrete time setting.

\begin{definition}
  $Y$ is called differentially subordinate to $X$ if $[X, X]_t - [Y, Y]_t$ is
  non-negative and non-decreasing in $t$. In this case we also call the ordered pair $(X,Y)$ differentially subordinate.
\end{definition}

For the definition of the square bracket process and its properties, see for example
Dellacherie-Meyer {\cite{DM}} or Protter {\cite{Pro}}. Notice
that in particular $[X, X]_0 = | X_0 |^2$ so that differential subordination
of $Y$ with respect to $X$ implies $| Y_0 |^2 \leqslant | X_0 |^2$. 
Recall that for any stopping time $\tau$ the stopping sigma algebra is 
$$\mathcal{F}_{\tau} = \{ \Lambda
\in \mathcal{F}: \Lambda \cap \{ \tau \leqslant t \} \in \mathcal{F}_t \}.$$

\

To adapt the concept of a sparse set (usually of dyadic intervals), we make the following definition:

\begin{definition}
   An increasing sequence   $\{ T^j\}_{j\geqslant 0}$ of stopping times  with nested sets $E_j =
  \{ T^j < \infty \}$ is called sparse if 
$$\forall A^j \subset E_j, A^j \in \mathcal{F}_{T^j} {\text{ there holds }}
    \mathbbm{P} (A^j \cap E_{j + 1}) \leqslant \frac{1}{2} \mathbbm{P}
    (A^j).$$
\end{definition}

Notice that we do not have the simple plateaus as in the discrete time case and as it turns out, this definition using the sparseness localised to each subset measurable in the coarsest filtration is the correct replacement.

\begin{definition}
If $\{ T^j\}_{j\geqslant 0}$ is a sparse sequence and $E_j = \{ T^j < \infty \}$ its associated sequence of nested sets, we call
$$\mathcal{S}: X\mapsto \mathcal{S} ( X ) = \sum_{j = 0}^{\infty} | X |_{T^j} 
    \chi_{E_j} $$ a sparse operator.
\end{definition}

In cases, where some of the processes involved lose their martingale structure, this definition has to be slightly modified. In this context, for a non-negative process $\widetilde{X}=(\widetilde{X}_t)_{t\geqslant 0}$ we call the operator 
\begin{equation}\label{sparse1}
S(\widetilde{X}) = \sum_{j=0}^{\infty}\mathbb{E}(\widetilde{X}|\mathcal{F}_{T^j}) \chi_{E_j}
\end{equation}
a \emph{sparse operator} associated to $\{T^j\}_{j\geqslant 0}$. In the above definition we denote by $\widetilde{X}=\widetilde{X}_\infty$ also the closure of $\widetilde{X}$.

Observe that if the process $\widetilde{X}=X$ is also a non-negative martingale, the definitions coincide.

\begin{definition}
The maximal function associated with $X$ is  $$X^{\ast} = \sup_{t \geqslant 0} | X_t |.$$ 
\end{definition}

We shall throughout this text denote by
the same letters also the closures of the martingales that arise.

\

Last, we will often refer to the weight's probabilistic characteristic:

\begin{definition}
We call a positive uniformly integrable martingale $w$ a weight. The
quantity $Q_p (w)$ below is the $A_p$ characteristic of the weight $w$. If $Q_p (w)$ is finite, then we say $w \in A_p$.
\[ Q_p (w) = \sup_{\tau} \left\| \mathbbm{E} \left[ \left( \frac{w_{\tau}}{w}
   \right)^{\frac1{p-1}} \middle| \, \mathcal{F}_{\tau} \right]^{p - 1}
   \right\|_{\infty} = \sup_{\tau} \left\| w_{\tau} u^{p - 1}_{\tau} \right\|_{\infty} ,\]
where the supremum runs over adapted stopping times  $\tau$ and where 
we write $u^p w = u$.
\end{definition}

If necessary, we may put a superscript to emphasise the filtration: $Q^{\mathcal{F}}$.

\section{Main results}
 We use a sparse operator with continuous stopping times, dominating the martingale $Y_t$ as well as Li's process $Z_t$ whose projection is the Riesz vector. Instead of cutting the domain into cubes, we are therefore able to use the Poisson extension itself, thus resulting in clean dimensionless estimates. 

The estimate we aim to show will be a consequence of a sparse domination of the stochastic processes $Y_t$ and $Z_t$ (see \cite{NL,Lacey2015}). In the case of $Y_t$ the proof is relatively close to the argument of Lacey \cite{Lacey2015} but in the case of $Z_t$, the object is not a martingale, so the sparse domination is different and one key of the proof relies on the weak-$L^1$ estimate for the maximal function of the studied stochastic operator. A delicate new domination method also comes into play in a second step, utilising the defining stochastic differential equation.
We do not aim at the fullest generality here, keeping our goal in mind, an estimate for the Riesz vector. Here are the results concerning the Riesz vector:

We get a dimensionless unweighted estimate in $\textup{L}^p$ spaces for the Riesz vector on manifolds with curvature bounded from below (Theorem \ref{unweightedRiesz}). The first proofs of this result are recent \cite{CD}, \cite{LiErratum}, \cite{BO} and all based on a form of a Bellman function. All these cited Bellman proofs give a better numeric estimate than our proof, but as mentioned earlier, our proof extends (for free) to the weighted case, as long as the curvature is non-negative, which the previous proofs do not. Our $L^p$ estimates are linear in $p$, which means proportional to $(p-1)^{-1}$ when $p<2$ and to $p-1$ when $p\geqslant2$.

\begin{theorem}[$\textup{L}^p$ estimate for the Riesz vector]\label{unweightedRiesz}
Suppose that $M$ is a complete Riemannian manifold without boundary and $\Ric_{\varphi}\geqslant-a$ for some $a\geqslant 0$. Then for all $f \in C_c^{\infty}(M)$, $p\in (1,\infty)$, we have the following dimension-free estimate
\begin{equation} \label{E1}
	\| \vec{R}_\varphi^a f \|_{\textup{L}^p(T^*_xM)} \leqslant C_p\|f\|_{\textup{L}^p(M)},
\end{equation}
where $C_p>0$ is a constant depending only on $p$ proportional to $(p-1)^{-1}$ when $p\to 1$ and to $p$ when $p\to \infty$.
\end{theorem}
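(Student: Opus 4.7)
The plan is to reduce the theorem to a probabilistic $L^p$ estimate for Li's process $Z_t$ of Section 2, and then to discharge that estimate using the sparse domination of $Z$ that is developed in the probabilistic part of the paper.

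By duality and density, it suffices to control $|\langle \vec{R}_\varphi^a f,g\rangle_{L^2(\mu_\varphi)}|$ for $f\in C_c^\infty(M)$ and a cotangent-valued $g$ with $\|g\|_{L^{p'}(\mu_\varphi)}=1$; by splitting $f$ into positive and negative parts we may assume $f\geqslant 0$, so that $X,\widetilde{X}\geqslant 0$. Inserting the representation formula \eqref{rep2}, the pairing equals (up to an absolute constant and a limit $y\to\infty$) $\E_y[Z_\tau\cdot G_\tau]$, where $G$ is the background-radiation martingale associated with $g$ built from the same process $\widetilde{B}_t$. Since $G$ is a martingale with $\|G^*\|_{L^{p'}(\Omega)}\lesssim \|g\|_{L^{p'}(\mu_\varphi)}$, the remaining task is the one-sided estimate
\[
\|Z^{*}\|_{L^p(\Omega)} \;\leqslant\; C_p\,\|\widetilde{X}\|_{L^p(\Omega)},
\]
with $C_p$ linear in $\max\{p,(p-1)^{-1}\}$, combined with $\|\widetilde{X}\|_{L^p(\Omega)}\lesssim \|f\|_{L^p(\mu_\varphi)}$ (the martingale part $X$ comes from the Poisson extension $Q^af$, and the increasing FV part $A_t$ is dominated by a time-average of $Q^af$ along $\widetilde{B}_t$, itself controlled by Doob's maximal inequality).

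To obtain the displayed bound, I would appeal to the sparse domination of $Z^*$ by a sparse operator applied to $\widetilde{X}$: for every nonnegative $G_\infty\in L^{p'}(\Omega)$ there is a sparse family of stopping times $\{T^j\}_{j\geqslant 0}$ with nested level sets $E_j=\{T^j<\infty\}$ such that
\[
\E[Z^*\cdot G_\infty]\;\leqslant\; C\,\E\!\left[\sum_{j\geqslant 0}\E(\widetilde{X}\mid \F_{T^j})\,\chi_{E_j}\cdot G_\infty\right]\;=\;C\,\E[S(\widetilde{X})\cdot G_\infty].
\]
A direct $L^p$ estimate for the continuous-time sparse operator $S$ of \eqref{sparse1} then gives $\|S(\widetilde{X})\|_{L^p(\Omega)}\leqslant C\,\max\{p,p'\}\,\|\widetilde{X}\|_{L^p(\Omega)}$: the sparsity condition forces $\mathbb{P}(E_j)\leqslant 2^{-j}$, each summand is an $L^1$ conditional average, and the remaining computation is a H\"older argument identical in spirit to the dyadic Euclidean case. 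Taking the supremum over $G_\infty$ (equivalently $g$) yields \eqref{E1} with the announced growth in $p$.

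The main obstacle is of course the sparse domination of $Z$ itself, which is not a martingale: the defining SDE $\mathd Z_t=(V_t-a\Id)Z_t\,\mathd t+\mathd Y_t$ has infinite memory, so the self-similar Lacey-type stopping time selection that suffices for the martingale transform $Y$ cannot be iterated as stated. This is the point at which a two-step approach becomes necessary: first a weak $L^1$ estimate for $Z^*$ via a carefully chosen Bellman function whose first derivative controls the drift term $(V_t-a\Id)Z_t$, and then a modified stopping-time construction that preserves sparsity despite that drift. Assuming these ingredients, which are the main probabilistic outputs of later sections, the reduction above delivers Theorem \ref{unweightedRiesz} with the claimed linear dependence on $p$ near the endpoints.
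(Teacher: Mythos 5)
Your overall route is the same as the paper's: reduce via Li's representation formula \eqref{rep2} to the $L^p(\Omega)$ estimate $\|Z^*\|_{L^p}\lesssim_p\|\widetilde{X}\|_{L^p}$, obtain that from the sparse domination of $Z$ plus an $L^p$ bound on the sparse operator \eqref{sparse1}, and then identify $\|\widetilde{X}\|_{L^p(\Omega)}$ with $\|f\|_{L^p(\mu_\varphi)}$. The dualization against a cotangent-valued $g$ is a cosmetic deviation: the paper instead applies Jensen's inequality to the conditional expectation and uses $d\mu_\varphi(x)=d\mathbb{P}(B^M_\tau=x)$ to pass directly to $\|\vec R_\varphi^a f\|_{L^p}\leqslant 2\lim_y\|Z_\tau\|_{L^p(\Omega)}$. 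Both work, but if you dualize you should pair $Z_\tau$ against the terminal value $G_\tau$ and use $\|G_\tau\|_{L^{p'}(\Omega)}=\|g\|_{L^{p'}(\mu_\varphi)}$, not $G^*$ and Doob, which would introduce a spurious factor $\sim p$ near $p\to\infty$.

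There is one genuine gap in your sketch: your justification of $\|\widetilde{X}\|_{L^p(\Omega)}\lesssim\|f\|_{L^p(\mu_\varphi)}$. You propose to control the finite variation part $A_\tau=a\int_0^\tau Q^af(B^M_s,B_s)\,\mathd s$ by a time average and Doob's maximal inequality, but this is an integral over the full (random, unbounded) lifetime $\tau$, and $\int_0^\tau \widetilde{X}_s\,\mathd s$ is not controlled by any maximal-function bound — there is simply no reason for it to be finite in $L^p$. The step is unnecessary: what enters the sparse operator is the closure $\widetilde{X}_\infty=\widetilde{X}_\tau$, and since $B_\tau=0$ and $Q^af(x,0)=f(x)$, one has the exact identity $\widetilde{X}_\tau=f(B^M_\tau)$. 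Because the law of $B^M_\tau$ is $\mu_\varphi$, this gives $\|\widetilde{X}\|_{L^p(\Omega)}=\|f\|_{L^p(\mu_\varphi)}$ exactly, with no need to split $\widetilde{X}$ into $X+A$ or to bound $A_\tau$ at all. With that correction, your reduction is exactly the paper's, and you have correctly identified the two hard probabilistic inputs (the weak-type estimate for $Z$ via the Bellman function $U$, and the modified stopping-time construction that copes with the infinite-memory SDE for $Z$).
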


Our approach allows us to change the measure and also get dimensionless weighted estimates in $\textup{L}^p$ spaces for the Riesz vector on manifolds with non-negative curvature. For the Euclidean setting, we refer to \cite{DPW}. For the case of manifolds, such an estimate was previously only known for the case $p=2$ and it was also based on a form of a certain Bellman function (see \cite{D}). A priori the weight has to be globally in $\textup{L}^2$ so as to be able to define the \emph{flow characteristic}
\[\tilde{Q}_p(w)=\sup_{x,y}(Q(w))(x,y)(Q(w^{-\frac1{p-1}}))^{p-1}(x,y).\]
The collection of weights for which this characteristic is finite is denoted $\tilde{A}_p$.
There is also a natural way to extend the class of the weights to resemble more the classical case allowing local $L^1$ weights. In this case we require that constants are integrable in the manifold $M$ equipped with the measure $d\mu_{\varphi}$ so as to prove the theorem for cut weights, such as in \cite{D}, that are in $L^1 \cap L^{\infty} \cap \textup{L}^2$ and then define the characteristic by a limiting procedure and deduce the theorem. See \cite{D} for a detailed exposition in the case $p=2$.

\begin{theorem}[Weighted $\textup{L}^p$ estimate for the Riesz vector for non-negative curvature]\label{Riesz}
Suppose that $M$ is a complete Riemannian manifold without boundary and $\Ric_{\varphi}\geqslant 0$. Then for all $f \in C_c^{\infty}(M)$, $p\in (1,\infty)$, and $w \in \widetilde{A}_p$, we have the following dimension-free estimate
\begin{equation} \label{E1w}
	\| \vec{R}_\varphi f \|_{\textup{L}^p(T^*_xM, w)} \leqslant \tilde{C}_p\widetilde{Q}_p(w)^{\max\{1,\frac{1}{p-1}\}}\|f\|_{\textup{L}^p(M, w)},
\end{equation}
where $\tilde{C}_p>0$ is again a constant depending only on $p$ with the expected end point behavior.
\end{theorem}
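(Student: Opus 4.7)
The plan is to reduce \eqref{E1w} to the continuous-time sparse domination of Li's process $Z_t$ established in the probability section, combined with sharp weighted estimates for the associated sparse operator. Since $\Ric_{\varphi}\geqslant 0$ corresponds to $a=0$, the finite-variation drift $A_t$ of \eqref{eq: A} vanishes, so $\widetilde{X}_t=X_t$ is a genuine non-negative martingale. This removes the submartingale complication that in the general case forces one to work with the non-martingale sparse operator \eqref{sparse1}, and it is precisely what makes a Buckley-type weighted bound available here while not expected in the general $\Ric_{\varphi}\geqslant -a$ setting. Without loss of generality I assume $f\geqslant 0$.

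First I would dualize the representation \eqref{rep2}. For a smooth, compactly supported section $g$ of $T^{\ast}M$ normalized by $\|g\|_{\textup{L}^{p'}(\mu_{\varphi},\, w^{-1/(p-1)})}\leqslant 1$,
\begin{equation*}
\int_M \langle \vec{R}_{\varphi} f, g\rangle \, \mathd\mu_{\varphi} \;=\; -2\lim_{y\to\infty}\E_y\bigl[\langle Z_{\tau},\, g(B^M_{\tau})\rangle\bigr].
\end{equation*}
Introducing the closed martingale $G_t:=\E_y(g(B^M_{\tau})\mid \F_t)$ in the filtration of the background radiation process, and lifting the weight by $w_t:=Qw(B^M_t,B_t)=\E_y(w(B^M_{\tau})\mid \F_t)$ (equal by the strong Markov property), the theorem reduces to a bilinear weighted estimate for the pair $(Z_{\tau},G_{\tau})$ relative to the lifted weight.

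Next I would apply the continuous-time bilinear sparse domination of $Z_t$ from the probability section: there exists a sparse sequence of stopping times $\{T^j\}_{j\geqslant 0}$ with nested sets $E_j$, adapted to $(X,|G|)$, such that
\begin{equation*}
\bigl|\E\langle Z_{\tau},G_{\tau}\rangle\bigr| \;\leqslant\; C \sum_{j\geqslant 0} \E\bigl[\E(X\mid \F_{T^j})\,\E(|G|\mid \F_{T^j})\,\chi_{E_j}\bigr].
\end{equation*}
The standard duality between sparse bilinear forms and the $A_p$ class transfers verbatim to our probabilistic continuous-time setting, since the argument uses only the $\tfrac12$-density property from the definition of sparseness and no cube geometry. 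A Carleson embedding with Carleson norm controlled by $Q_p(w)$, together with a two-weight testing argument, then bounds the sum above by
\begin{equation*}
\tilde{C}_p\, Q_p(w)^{\max\{1,\,1/(p-1)\}}\, \|X_{\infty}\|_{\textup{L}^p(w_{\infty})}\, \|G_{\infty}\|_{\textup{L}^{p'}(w_{\infty}^{-1/(p-1)})},
\end{equation*}
with the sharp Buckley-type exponent and the expected endpoint behavior in $p$.

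Finally, I would translate back to the manifold. The norm identifications $\|X_{\infty}\|_{\textup{L}^p(w_{\infty})}^p \to \int_M f^p\, w\, \mathd\mu_{\varphi}$ and its counterpart for $G_{\infty}$ follow as $y\to\infty$ from the $\mu_{\varphi}$-invariance of the semigroup generated by $\Delta_{\varphi}$ and the closure of the lifted martingales. The principal obstacle is the inequality $Q_p(w)\leqslant \widetilde{Q}_p(w)$: by the strong Markov property every conditional expectation $\E(w_{\infty}\mid \F_{\sigma})$ with $\sigma\leqslant \tau$ equals the Poisson extension $Qw$ evaluated at $\widetilde{B}_{\sigma}=(B^M_{\sigma},B_{\sigma})$, and similarly for $w_{\infty}^{-1/(p-1)}$, so the supremum over adapted stopping times defining $Q_p(w)$ is pointwise dominated by the analytic supremum defining $\widetilde{Q}_p(w)$. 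A secondary obstacle, internal to the sparse domination step, is that the factor $M_t$ in $Z_t$ gives the process infinite memory that breaks classical self-similar sparse arguments; this is exactly what the new modified domination procedure, driven by the structure of the defining stochastic ODE, is designed to handle. Letting $y\to\infty$ combines these pieces and yields \eqref{E1w}.
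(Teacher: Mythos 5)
Your proposal follows the same overall architecture as the paper (representation formula, sparse domination of $Z$, weighted sparse bounds, transfer back to the manifold), and the $a=0$ simplification $\widetilde{X}=X$ is indeed the key enabling observation, but several steps are organized differently from the paper's actual argument. First, the paper does not dualize at the Riesz-vector level: it applies Jensen's inequality directly to $\bigl|\mathbb{E}_y[Z_\tau\mid B^M_\tau=x]\bigr|^p$, reduces the left-hand side to $\lim_{y\to\infty}\|Z_\tau\|_{L^p(w)}^p$, and feeds that into a weighted $L^p$ bound for $Z^*$; your dualization against a test section $g$ and the auxiliary martingale $G_t$ is a valid but distinct route. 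Second, you invoke a ``continuous-time bilinear sparse domination of $Z_t$ from the probability section'' with stopping times adapted to $(X,|G|)$ — this object does not appear in the paper: Theorem~\ref{T: sparse decomposition} gives a \emph{pointwise} domination $Z^*\leqslant 8\,S(\widetilde X)$ where the sparse sequence depends only on $(\widetilde X,Z)$, not on any dualizing function. Your bilinear bound does follow from the pointwise one by inserting $G_\tau$ and conditioning, so the argument can be salvaged, but as stated it is mis-attributed and the stopping construction you describe is not the one the paper builds. Third, for the weighted estimate of the sparse form you propose a Carleson embedding plus two-weight testing directly at exponent $p$; the paper instead proves the $L^2$ weighted bound for the sparse operator by a direct step-function/disjointification/Doob argument and then reaches general $p$ via the martingale extrapolation theorem of \cite{DPW} — both give the sharp $\max\{1,1/(p-1)\}$ exponent, but they are genuinely different proofs of that step. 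Your treatment of $Q_p(w)\leqslant\widetilde Q_p(w)$ via the strong Markov identification of conditional expectations with the Poisson extension agrees with the paper's (the paper refers to \cite{DPW} for the comparability). One detail you pass over is that the theorem implicitly requires $w$ to be globally $L^2$ (or handled via cut weights) so the lifted $w_t$ is a well-defined uniformly integrable martingale, a point the paper flags just before stating Theorem~\ref{Riesz}.
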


Let us now discuss the results we obtain in probability theory.

We start by developing a sparse domination for a martingale $Y$, differentially subordinate to a martingale $X$. It relies on a known weak type estimate from \cite{Wang}.

\begin{theorem}[Sparse domination of $Y$]
\label{Theorem_sparsedomination}
Let $Y$ differentially subordinate to $X$ then there exists a sparse selection $$(X,Y) \mapsto \{ T^j\}_{j\geqslant 0}$$ such that almost surely
$$Y^*(\omega)\leqslant 8\sum_{j = 0}^{\infty} | X |_{T^j} (\omega)
    \chi_{E_j} (\omega),$$
    where the right hand side has finitely many terms almost surely.
\end{theorem}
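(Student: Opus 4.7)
The plan is to carry out a Lacey-style stopping-time construction, a continuous-time analog of the sparse selection in \cite{Lacey2015}, with the continuous-time weak-$L^1$ inequality of Wang for differentially subordinate martingales providing the sparseness input.

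Starting with $T^0\equiv 0$, so that $E_0=\Omega$ and $|Y_0|\leq|X_0|=|X|_{T^0}$ by differential subordination at $t=0$, I would inductively define $T^{j+1}$ on $E_j=\{T^j<\infty\}$ as the first time $t>T^j$ at which either the restarted maximal function $\sup_{T^j\leq s\leq t}|Y_s-Y_{T^j}|$, or the analogous maximal function of $X$, exceeds a fixed multiple of the $\mathcal{F}_{T^j}$-measurable quantity $|X|_{T^j}$; off $E_j$ I set $T^{j+1}\equiv\infty$. With the multiple chosen to be $4$, each of the two resulting bad events will be shown to have conditional probability at most $1/4$ on $E_j$, so their union satisfies $\mathbb{P}(A^j\cap E_{j+1})\leq\tfrac{1}{2}\mathbb{P}(A^j)$ for every $A^j\subset E_j$ in $\mathcal{F}_{T^j}$, which is precisely the sparseness condition.

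The two conditional probability bounds come from: (i) Doob's maximal inequality applied to the restarted martingale $X_{\cdot\vee T^j}-X_{T^j}$ under the filtration $(\mathcal{F}_{t\vee T^j})_{t\geq 0}$, and (ii) Wang's weak-$L^1$ inequality for c\`adl\`ag martingales in differential subordination, applied to the restarted pair $(X_{\cdot\vee T^j}-X_{T^j},\,Y_{\cdot\vee T^j}-Y_{T^j})$. The latter pair is itself in differential subordination because the process $[X,X]_{\cdot\vee T^j}-[X,X]_{T^j}-([Y,Y]_{\cdot\vee T^j}-[Y,Y]_{T^j})$ remains non-negative and non-decreasing. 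Because $A^j\in\mathcal{F}_{T^j}$ sits inside $E_j$, the two inequalities localise uniformly in $\omega\in A^j$ and give the claimed bounds.

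For the pointwise domination, the construction guarantees $|Y_t-Y_{T^j}|\leq 4|X|_{T^j}$ on the open stochastic interval $[T^j,T^{j+1})$. Iterating telescopically along the stopping times and using the jump inequality $|\Delta Y_{T^{j+1}}|\leq|\Delta X_{T^{j+1}}|$ to absorb c\`adl\`ag overshoot into a term dominated by $|X|_{T^{j+1}}$, the increments of $|Y|$ sum to $Y^{\ast}(\omega)\leq 8\sum_{j\geq 0}|X|_{T^j}(\omega)\chi_{E_j}(\omega)$ almost surely. Finally, sparseness yields $\mathbb{P}(E_j)\leq 2^{-j}$, so Borel--Cantelli ensures $T^j=\infty$ for all large $j$ a.s., and the sum on the right has only finitely many non-zero terms.

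The main obstacle I anticipate is the c\`adl\`ag overshoot at the stopping times: first-crossing stopping times permit jumps that overshoot the threshold, so the bound on the interval may fail strictly at $t=T^{j+1}$, and carrying the jump control through the iteration while keeping the final constant at $8$ rather than something larger requires careful two-sided accounting at each $T^{j+1}$. A secondary subtlety is rigorously localising Wang's weak-$L^1$ inequality to the $\omega$-pointwise conditional form needed over each $\mathcal{F}_{T^j}$-measurable set $A^j$; this is routine in the dyadic setting but needs to be spelled out carefully in continuous time, using the right-continuity of the filtration and the closure properties of uniformly integrable c\`adl\`ag martingales.
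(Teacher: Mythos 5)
Your outline reproduces the paper's high-level strategy — a continuous-time Lacey-style stopping construction driven by a conditional form of Wang's weak-$L^1$ inequality — but the sparsity step as you've set it up does not close, and the difficulty you flag at the end about ``two-sided accounting'' at the jumps is precisely what the paper resolves with a device you don't have.

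Concretely: you want to bound each of the two bad events (one for $X$, one for $Y$) by $1/4$. For the $X$-event, Doob's weak $L^1$ inequality for the restarted martingale gives
\[
\mathbb{P}\bigl(\sup_{t\geq T^j}|X_t-X_{T^j}|\geq 4\,|X|_{T^j}\,\big|\,\mathcal{F}_{T^j}\bigr)\leq\frac{\mathbb{E}\bigl[\,|X_\infty-X_{T^j}|\,\big|\,\mathcal{F}_{T^j}\bigr]}{4\,|X|_{T^j}},
\]
and for a Hilbert-space-valued martingale the numerator is \emph{not} controlled by $|X_{T^j}|$: conditional Jensen only gives $\mathbb{E}[\,|X_\infty|\mid\mathcal{F}_{T^j}]\geq|X_{T^j}|$, so this ratio can be arbitrarily large. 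The same problem infects the Wang step: the weak-$L^1$ bound for the zero-foot restarted pair $(X_{\cdot\vee T^j}-X_{T^j},\,Y_{\cdot\vee T^j}-Y_{T^j})$ has $\|X_{\cdot\vee T^j}-X_{T^j}\|_1$ on the right, again not comparable to $|X|_{T^j}$. Even if one could arrange these conditional $L^1$ norms to equal $|X|_{T^j}$, Wang's constant is $2/\lambda$, so at $\lambda=4$ it yields $1/2$ for the $Y$-event alone, not $1/4$, and the union already exceeds $1/2$. The paper circumvents all of this by making a single application of Wang's $(|X|+|Y|)^*$ estimate to the \emph{unshifted} $X$-iterate (foot $X_{T^j}$, so its conditional $L^1$ norm is exactly $\mathbb{E}[\,|X_\infty|\mid\mathcal{F}_{T^j}]$), paired with a $Y$-iterate whose foot is set to $r_{T^j}X_{T^j}$, where $r_{T^j}$ is a contraction built from the jump $\Delta Y_{T^j}=r_{T^j}\Delta X_{T^j}$. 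This choice makes the new pair genuinely differentially subordinate with $|Y^j_0|\leq|X^j_0|$, and one application of the localised weak-type lemma gives $\mathbb{P}(A^j\cap E_{j+1})\leq\tfrac12\mathbb{P}(A^j)$ with no appeal to Doob and no lost factor. The same $r_{T^j}$-adjusted foot is what produces the clean constant $8$ in the domination: for $t\geq T^{j+1}$ one writes $Y^j_t=\bigl(Y^j_{T^{j+1}_-}\bigr)-r_{T^{j+1}}X^j_{T^{j+1}_-}+Y^{j+1}_t$, each of the first two pieces is at most $4\,|X|_{T^j}$ by the definition of $T^{j+1}$, and the third is the next iterate, so $Y^{j*}\leq 8\,|X|_{T^j}\chi_{E_j}+Y^{j+1*}$ \emph{exactly}, rather than a telescoping sum that accumulates overshoot. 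Without this device a naive telescoping of $|Y_{T^{j+1}}-Y_{T^j}|$ across jumps forces a larger constant, which is what your ``careful two-sided accounting'' remark is pointing at; the paper's answer is the shifted foot, and your proposal does not supply an alternative.
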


Notice that there holds $\mathbb{P}(E_j)\to 0$ as $j\to\infty$. What follows below is a generalisation that will be of particular interest for the case of Riesz transforms:

First, we prove a weighted estimate for the sparse operator:
\begin{theorem}\label{Theorem_SX}
There exists $C_p\geqslant 0$ such that for all functions $X\in L^p(w)$ there holds
  \[ \| \mathcal{S}(X) \|_{L^p (w)} \leqslant C_{p} 
    Q_p (w)^{\max \{1, \frac1{p - 1}\}} \| X \|_{L^p (w)} . \]
  The estimate is sharp in terms of the dependence on $Q_p (w)$. 
\end{theorem}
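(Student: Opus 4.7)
My plan is to adapt the standard template for sharp weighted bounds on sparse operators (Hyt\"onen--Lacey--P\'erez, Moen, Lerner) to the continuous-time stopping-time setting. The first step is to establish that $\mathcal{S}$ is self-adjoint in the unweighted $L^2$ pairing: for $X, G \geq 0$ (and we may assume $X \geq 0$ by the absolute value in the definition),
\begin{equation*}
\mathbb{E}[\mathcal{S}(X)\, G] = \sum_j \mathbb{E}[X_{T^j}\, \chi_{E_j}\, G_{T^j}] = \mathbb{E}[X\, \mathcal{S}(G)],
\end{equation*}
since $X_{T^j}\chi_{E_j}$ is $\mathcal{F}_{T^j}$-measurable. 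Combined with the classical symmetry $Q_p(w) = Q_{p'}(\sigma)^{p-1}$ for the dual weight $\sigma = w^{-1/(p-1)}$, and with the weighted-duality identity
\begin{equation*}
\|\mathcal{S}\|_{L^p(w)\to L^p(w)} = \|\mathcal{S}\|_{L^{p'}(\sigma)\to L^{p'}(\sigma)}
\end{equation*}
obtained by computing the $L^2(w\, d\mathbb{P})$-adjoint of $\mathcal{S}$, the full theorem reduces to proving $\|\mathcal{S}(X)\|_{L^p(w)} \leq C_p Q_p(w) \|X\|_{L^p(w)}$ for $p \geq 2$; the bound with exponent $1/(p-1)$ in the range $p < 2$ then follows for free.

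For $p \geq 2$ I would dualize and write
\begin{equation*}
\|\mathcal{S}(X)\|_{L^p(w)} = \sup_{\|H\|_{L^{p'}(w)} \leq 1} \sum_j \mathbb{E}\bigl[X_{T^j}\, (Hw)_{T^j}\, \chi_{E_j}\bigr],
\end{equation*}
where $(Hw)_t = \mathbb{E}[Hw_\infty|\mathcal{F}_t]$, and then apply a stochastic Carleson embedding. The sparseness condition immediately gives $\mathbb{E}[\chi_{E_{j+1}}|\mathcal{F}_{T^j}] \leq \tfrac{1}{2}\chi_{E_j}$ almost surely (by contradiction, taking $A^j = \{\mathbb{E}[\chi_{E_{j+1}}|\mathcal{F}_{T^j}] > 1/2\} \cap E_j \in \mathcal{F}_{T^j}$), hence by iteration the Carleson condition $\sum_{k \geq j}\mathbb{E}[\chi_{E_k}|\mathcal{F}_{T^j}] \leq 2\chi_{E_j}$. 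Applying H\"older in the $j$-sum with exponents $(p,p')$ and invoking the Carleson embedding once with respect to $w$ and once with respect to $\sigma$ produces bounds proportional to $\|X\|_{L^p(w)}\|H\|_{L^{p'}(w)}$ multiplied by products of stopped averages; the pointwise $A_p$ inequality $w_{T^j}^{1/p}\sigma_{T^j}^{1/p'} \leq Q_p(w)^{1/p}$ then extracts the linear factor of $Q_p(w)$.

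Sharpness follows from Buckley-type dyadic constructions, which fit into our framework since the dyadic filtration with suitable stopping times produces a sparse family in our sense, so the known lower bounds transfer directly. The main technical obstacle is the stochastic Carleson embedding with the sharp $Q_p$-dependence: in the classical dyadic case one exploits the tree structure of cubes via principal-cube or parallel-stopping-time arguments, whereas here $\{E_j\}$ is merely linearly nested, forcing a one-dimensional recursion rather than a branching one. A careful accounting of the weighted Carleson coefficients along the cascade of stopping times, together with the $A_p$ condition relative to each $\mathcal{F}_{T^j}$, is what allows the linear (rather than super-linear) $Q_p$ bound to be extracted in the claimed range.
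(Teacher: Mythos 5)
Your reduction via duality is sound: $\mathcal{S}$ is indeed self-adjoint in the unweighted $L^2$ pairing (since $X_{T^j}\chi_{E_j}$ is $\mathcal{F}_{T^j}$-measurable), and combined with the identity $Q_{p'}(\sigma)=Q_p(w)^{p'-1}$ it correctly reduces the claimed exponent $\max\{1,1/(p-1)\}$ to the linear case $p\geqslant 2$. This part already differs from the paper, which instead proves the single case $p=2$ directly (via a step-function approximation of the stopped products, shifting mass to the pairwise-disjoint sets $S_k^{j+1}$ given by sparsity, inserting $1=w^{1/2}u^{1/2}$, and applying Doob's inequality to each weighted maximal function) and then invokes the stochastic extrapolation theorem of Domelevo--Petermichl to reach all $p$. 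Your self-duality route to dispatch $p<2$ is perfectly legitimate and arguably more elementary, but it shifts the full burden to proving the linear bound for \emph{every} $p\geqslant 2$ directly, which is substantially harder than the $p=2$ case.

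It is exactly here that your sketch has a genuine gap. The pointwise inequality $w_{T^j}^{1/p}\sigma_{T^j}^{1/p'}\leqslant Q_p(w)^{1/p}$ is correct, but it does not "extract the linear factor of $Q_p(w)$"; using both the $A_p$ and $A_{p'}(\sigma)$ conditions symmetrically, the stopped product $\sigma_{T^j}w_{T^j}$ is controlled by $Q_p(w)^{2/p}$, which for $p>2$ is strictly smaller than the sharp exponent. Since the bound is known to be sharp, the missing power of $Q_p(w)$ must enter through the Carleson step, and that is precisely where the argument breaks down. The packing condition you derive from sparsity, $\sum_{k\geqslant j}\mathbb{E}[\chi_{E_k}\,|\,\mathcal{F}_{T^j}]\leqslant 2\chi_{E_j}$, is an \emph{unweighted} Carleson condition. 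To invoke a Carleson embedding "with respect to $\sigma$" you would need a $\sigma$-weighted packing condition of the form $\sum_{k\geqslant j}\mathbb{E}_\sigma[\chi_{E_k}\,|\,\mathcal{F}_{T^j}]\leqslant C\chi_{E_j}$, and unweighted sparsity does \emph{not} imply $\sigma$-sparsity. In the dyadic Euclidean setting this transfer is achieved via the Fujii--Wilson $A_\infty$ property (giving the mixed $A_p$--$A_\infty$ bounds in the style of Hyt\"onen--Lacey--P\'erez), but no such ingredient is available or established in the continuous-time martingale setting of this paper, and proving a stochastic Fujii--Wilson lemma would be a substantial new result in its own right. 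The paper sidesteps all of this because the $p=2$ identity $w^{1/2}u^{1/2}=1$ makes the Cauchy--Schwarz/Doob argument close on its own; the analogous $w^{1/p}\sigma^{1/p'}=1$ is false for $p\neq 2$. As written, then, your argument for $p\geqslant 2$ does not go through, and either a stochastic $A_\infty$-Carleson embedding or an extrapolation step (as in the paper) is needed to complete the proof.
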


\begin{corollary}\label{Theorem_YstarX}
  There exists $C_{p}\geqslant 0$ such that for all pairs $(X,Y)$ where $Y$ is differentially subordinate to $X$ there holds
  \[ \| Y^{\ast} \|_{L^p (w)} \leqslant C_{p} 
  Q_p (w)^{\max  \{1, \frac1{p - 1}\}} \| X \|_{L^p (w)} . \]
  The estimate is sharp in terms of the dependence on $Q_p (w)$.
\end{corollary}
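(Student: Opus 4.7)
The plan is to derive this corollary essentially for free, by chaining the two preceding results stated in the excerpt. First, I would invoke Theorem \ref{Theorem_sparsedomination} to produce, for the given differentially subordinate pair $(X,Y)$, an explicit sparse sequence $\{T^j\}_{j\geqslant 0}$ with associated nested sets $E_j=\{T^j<\infty\}$ yielding the almost sure pointwise bound
\[
Y^{\ast}(\omega) \leqslant 8 \sum_{j=0}^{\infty} |X|_{T^j}(\omega)\, \chi_{E_j}(\omega) = 8\, \mathcal{S}(X)(\omega),
\]
where $\mathcal{S}$ is the sparse operator associated to this (random) sparse stopping time sequence. Taking $L^p(w)$ norms on both sides reduces everything to an $L^p(w)$ estimate for $\mathcal{S}(X)$.

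In the second step I would apply Theorem \ref{Theorem_SX} to the sparse operator just produced. Since the estimate in Theorem \ref{Theorem_SX} holds uniformly over all sparse sequences of stopping times, it applies to the sparse operator $\mathcal{S}$ selected by the domination step, yielding
\[
\|\mathcal{S}(X)\|_{L^p(w)} \leqslant C_p\, Q_p(w)^{\max\{1,\, 1/(p-1)\}}\, \|X\|_{L^p(w)}.
\]
Combining with the pointwise bound from the first step produces the claimed inequality with constant $8C_p$. The only subtle issue to confirm is that the sparse stopping times produced by Theorem \ref{Theorem_sparsedomination} are honestly adapted to the same filtration $\mathfrak{F}$ that appears in the definition of $Q_p(w)$; this is built into the construction since the stopping times are selected from the joint law of $(X,Y)$.

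For sharpness, I would point out that the exponent $\max\{1, 1/(p-1)\}$ is already sharp for the dominating sparse operator by Theorem \ref{Theorem_SX}, so no better dependence on $Q_p(w)$ can follow from this route. To see that the sharpness transfers to the pair $(X,Y^{\ast})$ rather than being lost in the domination, I would exhibit classical extremisers: a dyadic Haar martingale $X$ with its Haar multiplier transform $Y$ (a differentially subordinate martingale) on the real line, equipped with the standard power weight $w(x)=|x|^{\alpha}$ that saturates the $A_p$ characteristic. These examples, known to saturate $\max\{1, 1/(p-1)\}$ in the Euclidean dyadic setting, embed into the present continuous-time framework by viewing the dyadic filtration as a piecewise-constant càdlàg filtration. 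The main (and essentially only) obstacle in the whole argument is thus transferring these known sharp examples into the càdlàg martingale framework cleanly, but this is by now standard; everything else is a two-line chain of the previously stated theorems.
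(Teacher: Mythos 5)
Your proposal matches the paper's own proof exactly: the paper likewise first invokes Theorem \ref{Theorem_sparsedomination} to obtain the almost sure pointwise bound $Y^{\ast} \leqslant 8\,\mathcal{S}(X)$ and then applies Theorem \ref{Theorem_SX} to the resulting sparse operator in $L^p(w)$, and your worry about the sparse stopping times being adapted to the same filtration is indeed automatic from the construction. For sharpness the paper simply notes that the optimal dependence on $Q_p(w)$ is already known for the dyadic filtration on $[0,1]$ with Lebesgue measure and focuses on the upper bound, which is consistent with the extremiser you sketch.
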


In the special case $Y = X$ we also prove by a different method

\begin{theorem}
  \label{Theorem_XstarX}
  There exists $C_{p}\geqslant 0$ such that for all martingales $X$ there holds
   \[ \| X^{\ast} \|_{L^p (w)} \leqslant C_{p} Q_p (w)^{  \frac1{p - 1}} \| X \|_{L^p (w)} . \]
 The estimate is sharp in terms of the dependence on $Q_p (w)$. 
\end{theorem}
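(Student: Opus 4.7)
For $1 < p \leqslant 2$ the bound follows directly from Corollary~\ref{Theorem_YstarX} applied to the pair $(X,X)$, since $\max\{1,1/(p-1)\}=1/(p-1)$ in that range; I therefore focus on the substantive case $p > 2$, where the exponent $1/(p-1) < 1$ strictly improves on the general sparse bound of Theorem~\ref{Theorem_SX} and captures the extra structure of the maximal operator relative to a generic martingale transform. By Doob's submartingale inequality applied to $|X|$, it suffices to prove
\[ \|M_\F f\|_{L^p(w)} \leqslant C_p\, Q_p(w)^{1/(p-1)} \|f\|_{L^p(w)}, \qquad f \assign |X_\infty| \geqslant 0, \]
for the martingale maximal operator $M_\F f \assign \sup_t \E(f | \F_t)$.

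I would follow the classical Buckley strategy, adapted to the continuous-time filtration at hand. First, define principal stopping times by $\tau_0 \assign 0$ and
\[ \tau_{j+1} \assign \inf\{ t > \tau_j \,:\, \E(f|\F_t) > 2\,\E(f|\F_{\tau_j}) \}, \qquad E_j \assign \{\tau_j < \infty\}. \]
A dyadic splitting of any $\F_{\tau_j}$-measurable subset of $E_j$ into level sets of $\E(f|\F_{\tau_j})$, combined with the definition of $\tau_{j+1}$, shows that $\{\tau_j\}$ is a sparse family and yields the pointwise \emph{disjoint-support} domination
\[ M_\F f \leqslant 2\sum_{j \geqslant 0} \E(f | \F_{\tau_j}) \, \chi_{E_j \setminus E_{j+1}}, \]
strictly sharper than the nested bound of Theorem~\ref{Theorem_sparsedomination}. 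Combining this with H\"older's conditional inequality applied with the dual martingale $u_t \assign \E(w_\infty^{-1/(p-1)} | \F_t)$ and the $A_p$ identity $u_{\tau_j}^{p-1} w_{\tau_j} \leqslant Q_p(w)$ then gives the weak-type $(p,p)$ bound
\[ \lambda^p \, w(\{M_\F f > \lambda\}) \leqslant Q_p(w) \|f\|_{L^p(w)}^p, \qquad \lambda > 0, \]
that is, the weak bound with constant $Q_p(w)^{1/p}$.

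To upgrade this to the sharp strong-type exponent $1/(p-1)$, I would invoke the self-improvement of the $A_p$ condition: there exists $\eta = c_p / Q_p(w)^{1/(p-1)}$ such that $w \in A_{p - \eta}$ with $Q_{p - \eta}(w) \leqslant 2Q_p(w)$. Applying the weak-type bound at level $p - \eta$ and interpolating via Marcinkiewicz between this weak $(p-\eta, p-\eta)$ bound and the trivial strong $L^\infty \to L^\infty$ bound, a careful accounting of the Marcinkiewicz factor $(p/\eta)^{1/p}$ and the weak factor $Q_{p-\eta}(w)^{1/p} \lesssim Q_p(w)^{1/p}$ produces the desired strong constant $\lesssim Q_p(w)^{1/(p(p-1)) + 1/p} = Q_p(w)^{1/(p-1)}$. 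The main obstacle is the sharp self-improvement step itself in our right-continuous, possibly discontinuous filtration setting: it rests on a martingale John--Nirenberg inequality carried out along the principal stopping times above, which must remain quantitatively sharp with $\eta \sim Q_p(w)^{-1/(p-1)}$ even in the presence of jumps. Sharpness of the resulting exponent is standard, verified by testing against a suitable family of power-type martingale weights as in Buckley's original Euclidean construction.
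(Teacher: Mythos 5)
Your proposal diverges substantially from the paper's proof, and both branches of your argument encounter genuine obstructions.

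For $p>2$ you rely on the classical Buckley bootstrap: weak $(p,p)$ with constant $Q_p(w)^{1/p}$, the self-improvement $w\in A_{p-\eta}$ with $\eta\sim Q_p(w)^{-1/(p-1)}$ and $Q_{p-\eta}(w)\lesssim Q_p(w)$, and then Marcinkiewicz interpolation against the trivial $L^\infty$ bound. The exponent arithmetic is correct, but the self-improvement step is precisely what is \emph{not} available here. The theorem is stated for arbitrary uniformly integrable c\`adl\`ag martingales over a general right-continuous filtration, possibly with uncontrolled jumps and no doubling/regularity hypothesis; in that generality the openness of the $A_p$ class (equivalently, a quantitative reverse H\"older / martingale John--Nirenberg inequality with the right $\eta$) fails. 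The paper explicitly flags this as the reason it does not follow Buckley: ``Buckley's proof enjoyed an extension to some martingales with certain restrictive homogeneity conditions in the presence of jumps -- this was needed because of the failure of the openness of the $A_p$ class in the general context. The argument here does not rely on the openness condition of the $A_p$ classes.'' You acknowledge this as ``the main obstacle,'' but it is not a technicality to be pushed through: there is no sharp self-improvement to invoke in this setting, and your strong-type constant is derived from it.

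The other branch, $1<p\leqslant 2$, is circular as written. You deduce the claim from Corollary~\ref{Theorem_YstarX}, which is a consequence of Theorem~\ref{Theorem_SX}. But Theorem~\ref{Theorem_SX} is proven in the paper only for $p=2$ directly; the extension to $p\neq 2$ uses the extrapolation theorem (Theorem~\ref{thmextrap}), whose stated constants explicitly involve $c_{\ref{Theorem_XstarX},p}$ and $c_{\ref{Theorem_XstarX},p'}$ -- the very constants from Theorem~\ref{Theorem_XstarX} that you are trying to prove. So for $p<2$ your route presupposes the target estimate. The paper instead gives a short, self-contained proof valid for all $p$: starting from $|X_t|^{p-1}\leqslant(\mathbb{E}[|X|\,|\,\mathcal F_t])^{p-1}$, one inserts $u^{-1}u$ with $u=w^{-1/(p-1)}$, uses the $A_p$ characteristic once pointwise, and then controls the resulting expression by two applications of Doob's $L^p$ maximal inequality, one with respect to $d\mathbb{P}_u$ and one with respect to $d\mathbb{P}_w$. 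This pointwise domination by iterated weighted maximal functions (in the spirit of Lerner's elementary approach to the Hardy--Littlewood maximal operator) bypasses both sparse decompositions and reverse H\"older entirely, which is what makes the theorem hold for \emph{all} martingales and filtrations, and it should be the route you adopt.
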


The weighted estimates are well known to be sharp in terms of the dependence on the
$A_p$ characteristic, already for dyadic filtration on $[0, 1]$ endowed with
Lebesgue measure. In this paper we focus on the upper estimates.

\medskip

For the remaining results, we will always make the following hypotheses

\begin{hypothesis} \label{H: hypotheses} 
Let $X$ and $Y$ be uniformly integrable c\`adl\`ag martingales so that $Y$ is differentially subordinate to $X$, and $X\geqslant 0$.  Let further $\widetilde{X}_t = X_t+A_t$ be a non-negative uniformly integrable submartingale where $A_t$ is a predictable increasing finite variation process. Finally, let $Z$ be a c\`adl\`ag semi-martingale such that $|Z_0|\leqslant |\widetilde{X}_0|$ and whose increments satisfy the stochastic differential equation 
\[\mathd Z_t=(V_{t_-}-a\Id)Z_{t_-} \mathd t +\mathd Y_t,\]
where $V_t$ is an adapted process with values in non-positive, symmetric $n \times n$ matrices and $a \geqslant 0$.
\end{hypothesis}

To treat the process $Z$, we begin with a weak type estimate targeted to $Z$, namely

\begin{lemma}[Weak-type estimate for $Z$]\label{L: weak type}
Let $\widetilde{X}_t$ and $Z_t$ as in Hypothesis \ref{H: hypotheses}.
Then for all $\lambda >0$ we have
\begin{equation}
\label{wte}
\mathbb{P}\left( (|Z_t|+|\widetilde{X}_t|)^*  \geqslant  \lambda \right) \leqslant 2 \lambda ^{-1} \|\widetilde{X}\|_1.
\end{equation}
\end{lemma}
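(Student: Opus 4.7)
The plan is to run a Burkholder--Wang-type Bellman function argument, adapted to the semi-martingale $Z$ and to the submartingale compensator $A$. Fix $\lambda > 0$; the goal is a function $\Phi\colon \R_+ \times \R^n \to \R$ with three properties: (i) \emph{majorization} $\Phi(x, z) \geqslant \chi_{\{x + |z| \geqslant \lambda\}} - 2x/\lambda$; (ii) \emph{initial estimate} $\Phi(x, z) \leqslant 0$ whenever $|z| \leqslant x$; and (iii) \emph{supermartingale condition}: $\Phi(\widetilde{X}_t, Z_t)$ has non-positive It\^o drift along paths admissible under Hypothesis~\ref{H: hypotheses}.

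The natural candidate is the rescaled weak-type function of Burkholder--Wang,
\[
\Phi(x, z) = \begin{cases}
(|z|^2 - x^2)/\lambda^2, & x + |z| \leqslant \lambda,\\
1 - 2x/\lambda, & x + |z| > \lambda.
\end{cases}
\]
Properties (i) and (ii) are verified by direct inspection, (ii) using $|Z_0| \leqslant \widetilde{X}_0$. For (iii), It\^o's formula decomposes the drift of $\Phi(\widetilde{X}_t, Z_t)$ into three pieces: (a) the continuous quadratic-variation correction equals $(d[Y,Y]^c_s - d[X,X]^c_s)/\lambda^2 \leqslant 0$ in the interior by differential subordination and vanishes in the outer region; (b) the drift $\partial_x \Phi \, dA_s$ is non-positive because $\partial_x \Phi \leqslant 0$ everywhere ($-2x/\lambda^2$ in the interior, $-2/\lambda$ outside) and $A$ is increasing --- this is the role of the first derivative of the Bellman function alluded to in the introduction; (c) the drift $2 \langle z, (V_{s-} - a\Id) z\rangle/\lambda^2$ in the interior is non-positive since $V_{s-} - a\Id$ is negative semi-definite.

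With $\Phi$ in hand, set $\tau = \inf\{t : \widetilde{X}_t + |Z_t| \geqslant \lambda\}$ and apply optional stopping to the supermartingale $\Phi(\widetilde{X}_{t\wedge\tau}, Z_{t\wedge\tau})$:
\[
\mathbb{P}(\tau \leqslant t) - \frac{2}{\lambda}\mathbb{E}[\widetilde{X}_{t\wedge\tau}] \leqslant \mathbb{E}\,\Phi(\widetilde{X}_{t\wedge\tau}, Z_{t\wedge\tau}) \leqslant \mathbb{E}\,\Phi(\widetilde{X}_0, Z_0) \leqslant 0.
\]
Since $\widetilde{X}$ is a non-negative uniformly integrable submartingale, $\mathbb{E}[\widetilde{X}_{t\wedge\tau}] \leqslant \|\widetilde{X}\|_1$, hence $\mathbb{P}(\tau \leqslant t) \leqslant 2\|\widetilde{X}\|_1/\lambda$. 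Letting $t \to \infty$ and identifying $\{\tau < \infty\}$ with $\{(\widetilde{X} + |Z|)^* \geqslant \lambda\}$ yields the claim.

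The main obstacle is checking the supermartingale property across the c\`adl\`ag jumps and across the non-smooth interface $x + |z| = \lambda$. This reduces to Burkholder's line-segment concavity inequality for $\Phi$, verified case by case: at jumps of $X, Y$ (totally inaccessible times, with $A$ continuous there) the interior jump increment past its first-order linearization equals $(|\Delta Y_s|^2 - (\Delta X_s)^2)/\lambda^2 \leqslant 0$ by differential subordination; at jumps of $A$ (predictable, with $X, Y$ continuous there) one has $\Delta Z_s = 0$ and $\Delta \widetilde{X}_s = \Delta A_s \geqslant 0$, giving an interior increment of $-(\Delta A_s)^2/\lambda^2 - 2\widetilde{X}_{s-} \Delta A_s/\lambda^2 \leqslant 0$; and jumps crossing the interface are absorbed by concavity of $\Phi$ along the diagonal $x + |z| = $ const. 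This case analysis is the technically delicate portion and is where the explicit form of $\Phi$, rather than mere size/convexity estimates, becomes essential.
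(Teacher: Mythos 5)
Your proposal takes the same route as the paper: the same Bellman function $U$ (your $\Phi$ is the $\lambda$-rescaled version), the same minorant ($\chi_{\{x+|z|\geqslant\lambda\}}-2x/\lambda$ is exactly the paper's auxiliary function $V$), the same It\^o decomposition into initial term, first-order drift, quadratic variation, and jump sum, and the same optional-stopping conclusion. So the macro-structure is correct and matches.

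There is, however, a gap in the jump analysis. You split jump times into two disjoint families, ``totally inaccessible times where $X,Y$ jump and $A$ is continuous'' and ``predictable times where $A$ jumps and $X,Y$ are continuous.'' That dichotomy is not forced by Hypothesis~\ref{H: hypotheses}: a general c\`adl\`ag uniformly integrable martingale $X$ may well jump at predictable times, and in particular at the same predictable times as $A$. At such a time $s$ the interior second-order jump remainder is
\[
\frac{|\Delta Y_s|^2 - (\Delta X_s + \Delta A_s)^2}{\lambda^2}
= \frac{|\Delta Y_s|^2 - (\Delta X_s)^2}{\lambda^2} - \frac{2\,\Delta X_s\,\Delta A_s}{\lambda^2} - \frac{(\Delta A_s)^2}{\lambda^2},
\]
and the cross term $-2\Delta X_s\,\Delta A_s/\lambda^2$ has no pathwise sign, so the pathwise case-by-case claim you make fails in this case. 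The fix, and what the paper actually does, is to bound the jump sum only \emph{in expectation}: since $A$ is predictable and $X$ a martingale, $\E[\Delta X_s\,\Delta A_s]=\E\bigl[\Delta A_s\,\E[\Delta X_s\mid\mathcal F_{s-}]\bigr]=0$, whence $\E|\Delta Z_s|^2=\E|\Delta Y_s|^2\leqslant\E|\Delta X_s|^2\leqslant\E|\Delta\widetilde X_s|^2$. This is enough because the goal is a single expectation inequality, not a pathwise supermartingale drift bound. Your write-up should drop the assumption that the two jump types occur at disjoint times and instead insert this expectation argument; with that repair, the proof matches the paper's.
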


The approach is similar to \cite{BanWan1995,Wang} regarding the treatment of jumps, and similar to \cite{BO} regarding the treatment of $Z$ in the case of continuous paths processes. However, we also seek an estimate with respect to $\widetilde{X}_t = X_t+A_t$ rather than $X_t$, and therefore need to adapt the proof to this situation. Remarkably, the same special function can be used for all those situations. We refer to Section \ref{SS: weak type Z} for the proof.

Next, we state our sparse domination. We recall that we denote by $Z^*=\sup_{t\geqslant 0}|Z_t|$ the maximal function associated with the process $Z$. 

\begin{theorem}[Sparse domination for $Z$]\label{T: sparse decomposition}
Let $\widetilde{X}_t$ and $Z_t$ as in Hypothesis \ref{H: hypotheses}.
Then there exists a sparse sequence $\{T^j\}_{j\geqslant 0}$ such that almost surely
\begin{equation}\label{sparse dec}
Z^*(\omega) \leqslant 8 S(\widetilde{X})(\omega),
\end{equation}
where $S(\widetilde{X})$ is the sparse operator associated to $\{T^j\}_{j\geqslant 0}$ defined by \eqref{sparse1}.
\end{theorem}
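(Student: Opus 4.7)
The plan is to adapt Lacey's recursive sparse selection to this continuous time semi-martingale with memory, exploiting the contractive flow generated by $V_{t_-}-a\Id$ to ``restart'' the SDE at each stopping time, and appealing to Lemma~\ref{L: weak type} conditionally on $\mathcal{F}_{T^j}$ to produce the probability factor $\tfrac12$. Let $\Phi_t$ be the matrix-valued flow solving $\mathd\Phi_t=(V_{t_-}-a\Id)\Phi_{t_-}\mathd t$ with $\Phi_0=\Id$; because $V_{t_-}-a\Id$ is symmetric and non-positive, $\|\Phi_t\Phi_s^{-1}\|\leqslant 1$ for $t\geqslant s$. Variation of parameters yields, for $t\geqslant T^j$,
\[
Z_t=\Phi_t\Phi_{T^j}^{-1}Z_{T^j}+W^{(j)}_{t-T^j},\qquad W^{(j)}_s:=\Phi_{T^j+s}\int_{T^j}^{T^j+s}\Phi_{u_-}^{-1}\mathd Y_u,
\]
so $|Z_t|\leqslant |Z_{T^j}|+|W^{(j)}_{t-T^j}|$. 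In the filtration shifted by $T^j$, the restart $W^{(j)}$ satisfies the same SDE as $Z$, driven by the martingale $Y^{(j)}_s:=Y_{T^j+s}-Y_{T^j}$, which remains differentially subordinate to $X^{(j)}_s:=X_{T^j+s}-X_{T^j}$.

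Set $T^0:=0$ and $c_j:=\mathbb{E}(\widetilde{X}\mid\mathcal{F}_{T^j})$. By the submartingale property and Hypothesis~\ref{H: hypotheses}, $|Z_0|\leqslant\widetilde{X}_0\leqslant c_0$. Given $T^j$, define on $E_j=\{T^j<\infty\}$ the stopping time
\[
T^{j+1}:=\inf\bigl\{t>T^j:|W^{(j)}_{t-T^j}|+|\widetilde{X}_t-\widetilde{X}_{T^j}|\geqslant 4c_j\bigr\},
\]
and $T^{j+1}:=\infty$ otherwise (right-continuity of $\mathfrak{F}$ and c\`adl\`ag paths make this a stopping time). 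For $t\in[T^j,T^{j+1})$ the stopping rule forces $|W^{(j)}_{t-T^j}|<4c_j$, and the contraction bound gives $|Z_t|\leqslant|Z_{T^j}|+4c_j$. An induction on $j$ starting from $|Z_0|\leqslant c_0$ delivers $|Z_t|\leqslant 5\sum_{i:T^i\leqslant t}c_i\leqslant 8\,S(\widetilde{X})(\omega)$ pathwise (the extra slack accommodates a possible jump of $W^{(j)}$ at $T^{j+1}$, controlled via $|\Delta Y|\leqslant|\Delta X|\leqslant|\Delta\widetilde{X}|$ from differential subordination). Taking the supremum in $t$ yields~\eqref{sparse dec}.

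It remains to verify sparsity: for $A^j\in\mathcal{F}_{T^j}$ with $A^j\subset E_j$ one must show $\mathbb{P}(A^j\cap E_{j+1})\leqslant\tfrac12\mathbb{P}(A^j)$. Apply Lemma~\ref{L: weak type} under the regular conditional probability given $\mathcal{F}_{T^j}$, to the shifted triple $(X^{(j)},Y^{(j)},\widetilde{X}^{(j)})$ with $\widetilde{X}^{(j)}_s:=\widetilde{X}_{T^j+s}-\widetilde{X}_{T^j}$ and $W^{(j)}$ playing the role of $Z$. With threshold $\lambda=4c_j$ this yields on $E_j$
\[
\mathbb{P}(T^{j+1}<\infty\mid\mathcal{F}_{T^j})\leqslant\frac{2}{4c_j}\,\mathbb{E}(\widetilde{X}_\infty-\widetilde{X}_{T^j}\mid\mathcal{F}_{T^j})=\frac{c_j-\widetilde{X}_{T^j}}{2c_j}\leqslant\frac12,
\]
since $\widetilde{X}_{T^j}\geqslant 0$ and $\widetilde{X}$ is uniformly integrable. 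Integrating against $\chi_{A^j}$ gives the sparseness. The main technical obstacle is the conditional application of Lemma~\ref{L: weak type}: although $\widetilde{X}^{(j)}$ remains a submartingale in the shifted filtration, it is no longer pathwise non-negative, so Hypothesis~\ref{H: hypotheses} is not literally inherited. This is resolved either by re-running the Bellman-function proof of Lemma~\ref{L: weak type} under this mildly weakened hypothesis, observing that the non-negativity of $\widetilde{X}$ is used almost entirely through the initial condition which is preserved by the shift, or by a short truncation/monotone approximation. Given such a conditional weak-type bound, the recursive construction above completes the proof.
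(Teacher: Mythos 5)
Your construction via the flow $\Phi_t$, variation of parameters, and the restarted forcing terms $W^{(j)}$ is essentially a repackaging of what the paper does (the paper writes $Z = \sum_n \widetilde Z^n$ by superposing the ODEs; your $W^{(j)}$ is the same object, and the contraction $\|\Phi_t\Phi_s^{-1}\|\leqslant 1$ is Remark \ref{nonincreasingW}). The domination arithmetic and the recursive selection of $T^{j+1}$ are fine, and the constant $5\leqslant 8$ leaves room for jumps as you say.

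The genuine gap is in the sparsity step, and you have put your finger on it but misdiagnosed the cure. You propose to apply Lemma \ref{L: weak type} conditionally to the shifted process $\widetilde{X}^{(j)}_s := \widetilde{X}_{T^j+s} - \widetilde{X}_{T^j}$, which indeed fails Hypothesis \ref{H: hypotheses} because it is not pathwise non-negative. You then assert that the non-negativity of $\widetilde{X}$ enters the Bellman argument ``almost entirely through the initial condition.'' That is not the case: in the treatment of the term $I_1$ of the It\^o expansion, the drift contribution is
\[
N_T = -2\int_{0_+}^T \widetilde{X}_s\,\mathd A_s + 2\int_{0_+}^T \langle Z_s,(V_s-a\Id)Z_s\rangle\,\mathd s,
\]
and non-positivity of the first integral requires $\widetilde{X}_s\geqslant 0$ \emph{along the entire trajectory}, not merely at $s=0$. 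Once you subtract the constant $\widetilde{X}_{T^j}$, the sign control on $-2\int\widetilde{X}^{(j)}_s\,\mathd A^{(j)}_s$ is lost, and the weak type estimate does not survive. A secondary issue is the $L^1$ bound: the lemma's right-hand side is $2\lambda^{-1}\|\widetilde{X}\|_1$ with $\|\widetilde{X}\|_1=\E|\widetilde{X}_\infty|$, and for the shifted process $\E\bigl(|\widetilde{X}_\infty-\widetilde{X}_{T^j}|\mid\F_{T^j}\bigr)$ need not equal $c_j-\widetilde{X}_{T^j}$ (the difference is not sign-definite), so your chain of inequalities yielding $\tfrac12$ is not justified either. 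The correct repair, and the one the paper uses, is not to subtract the constant at all: set the iterated submartingale's foot at $\widetilde{X}_{T^j}$ (i.e.\ $\widetilde{X}^{(j)}_s := \widetilde{X}_{T^j+s}$, supported on $E_j$), while still restarting $Z$ at $0$. Then Hypothesis \ref{H: hypotheses} is literally inherited ($\widetilde{X}^{(j)}\geqslant 0$, $|Z^{(j)}_0|=0\leqslant\widetilde{X}^{(j)}_0$), and the conditional weak-type bound (the paper's Corollary \ref{C:WTE}) gives $\mathbb{P}(A^j\cap E_{j+1})\leqslant\tfrac{2}{4}\mathbb{P}(A^j)=\tfrac12\mathbb{P}(A^j)$ cleanly, because $\E(\widetilde{X}^{(j)}\mid\F_{T^j})=c_j$ on the nose. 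With that single change the rest of your argument goes through.
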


As a consequence of the sparse domination from Theorem \ref{T: sparse decomposition} and estimates for the sparse operator $S(\widetilde{X})$ we get the following respectively unweighted $L^p$ and weighted $\textup{L}^2$ estimates for $Z^*$.

\begin{theorem}[Estimates for $Z^*$] \label{Z est}
Let $\widetilde{X}_t$ and $Z_t$ as in Hypothesis \ref{H: hypotheses}.
Then for every $p>1$ there exists $C_{p}>0$ such that
\begin{equation}
\|Z^*\|_{\textup{L}^p}\leqslant C_{p} \| \widetilde{X} \|_{\textup{L}^p}.
\label{Zest_1}
\end{equation}
\end{theorem}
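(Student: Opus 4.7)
The plan is to combine the pointwise sparse domination from Theorem~\ref{T: sparse decomposition} with the unweighted case of the sparse operator estimate from Theorem~\ref{Theorem_SX}. Since almost surely $Z^*(\omega)\le 8\,S(\widetilde{X})(\omega)$ for some sparse sequence $\{T^j\}_{j\ge 0}$, it will be enough to prove that $\|S(\widetilde{X})\|_{\textup{L}^p}\le C_p\|\widetilde{X}\|_{\textup{L}^p}$.

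The one conceptual point is that $S$ in \eqref{sparse1} is defined on submartingales (through conditional expectations of the terminal value), while Theorem~\ref{Theorem_SX} is stated for the sparse operator $\mathcal{S}$ attached to a martingale. I would close $\widetilde{X}$ into a martingale: since $\widetilde{X}$ is a non-negative uniformly integrable submartingale, it converges almost surely and in $L^1$ to some $\widetilde{X}_\infty\ge 0$; introducing the Doob martingale $M_t:=\mathbb{E}(\widetilde{X}_\infty\mid\mathcal{F}_t)$ one has $M_{T^j}=\mathbb{E}(\widetilde{X}_\infty\mid\mathcal{F}_{T^j})$, which coincides with $\mathbb{E}(\widetilde{X}\mid\mathcal{F}_{T^j})$ in the notational convention of \eqref{sparse1}. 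Because $M\ge 0$, this yields the identification
\[
S(\widetilde{X})\;=\;\sum_{j=0}^\infty |M|_{T^j}\chi_{E_j}\;=\;\mathcal{S}(M),
\]
the ordinary martingale sparse operator attached to the same sparse sequence $\{T^j\}_{j\ge 0}$.

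At this point Theorem~\ref{Theorem_SX} applied with the weight $w\equiv 1$ (so that $Q_p(w)=1$) delivers $\|\mathcal{S}(M)\|_{\textup{L}^p}\le C_p\|M\|_{\textup{L}^p}=C_p\|\widetilde{X}_\infty\|_{\textup{L}^p}$. Finally, because $\widetilde{X}$ is a non-negative submartingale, $\widetilde{X}_t^p$ is itself a submartingale by convexity of $x\mapsto x^p$, so $t\mapsto\|\widetilde{X}_t\|_{\textup{L}^p}$ is non-decreasing and $\|\widetilde{X}_\infty\|_{\textup{L}^p}=\|\widetilde{X}\|_{\textup{L}^p}$ (assuming the right-hand side of \eqref{Zest_1} is finite, which is the only non-trivial case; otherwise there is nothing to prove). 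Combining with $Z^*\le 8 S(\widetilde{X})$ gives the claim.

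The main novelty has already been absorbed into Theorem~\ref{T: sparse decomposition}, and the sharpness-oriented harmonic analysis is concentrated in Theorem~\ref{Theorem_SX}; I do not foresee any real obstacle here beyond checking the elementary identification $S(\widetilde{X})=\mathcal{S}(M)$. This modular reduction is precisely the payoff of having developed the sparse framework in continuous time: once a sparse domination is available, new $\textup{L}^p$ bounds follow for free from the model estimates.
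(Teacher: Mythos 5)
Your argument is correct, and it follows the same overall strategy as the paper: combine the sparse domination $Z^* \leqslant 8\,S(\widetilde{X})$ of Theorem \ref{T: sparse decomposition} with an $L^p$ bound for the sparse operator $S(\widetilde{X})$. Where the two proofs diverge is in how that sparse operator bound is obtained. The paper states the unweighted estimate $\|S(\widetilde{X})\|_{L^p} \leqslant C_p'\|\widetilde{X}\|_{L^p}$ and omits its proof, remarking that it is ``almost the same as the proof of the estimate for the sparse operator in the martingale case,'' i.e.\ one re-runs the argument of Theorem \ref{Theorem_SX} with $\mathbb{E}(\widetilde{X}\mid\mathcal{F}_{T^j})$ in place of $|X|_{T^j}$. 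You instead observe that no new argument is needed at all: since $\widetilde{X}$ is a non-negative uniformly integrable submartingale, its closure $\widetilde{X}_\infty$ exists, and if $M_t := \mathbb{E}(\widetilde{X}_\infty\mid\mathcal{F}_t)$ is the associated Doob martingale then $M_{T^j} = \mathbb{E}(\widetilde{X}\mid\mathcal{F}_{T^j}) \geqslant 0$, so $S(\widetilde{X}) = \mathcal{S}(M)$ as operators built on the same sparse sequence. Applying Theorem \ref{Theorem_SX} verbatim with $w\equiv 1$ (so $Q_p(w)=1$) and $\|M\|_{L^p} = \|\widetilde{X}_\infty\|_{L^p} = \|\widetilde{X}\|_{L^p}$ then yields the claim. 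This is a genuinely cleaner reduction: it reuses the already-proved theorem rather than gesturing at a parallel proof, and it makes transparent why the submartingale case is no harder than the martingale case. The paper itself notes (just after \eqref{sparse1}) that $S$ and $\mathcal{S}$ agree for non-negative martingales, which is exactly the hinge of your identification. Your short detour about $\widetilde{X}_t^p$ being a submartingale is unnecessary (the paper's convention already takes $\|\widetilde{X}\|_{L^p}$ to mean the norm of the closure) but harmless.
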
  

\begin{theorem}[Weighted estimates for $Z^*$ for $p=2$] \label{Z w est}
Let $\widetilde{X}_t$ and $Z_t$ as in Hypothesis \ref{H: hypotheses}.
Let further $w$ be a positive uniformly integrable submartingale (a \emph{weight}).
Then
\begin{equation}
\|Z^*\|_{\textup{L}^2(w)}\leqslant {C}_2 Q_2(w)\| \widetilde{X} \|_{\textup{L}^2(w)}.
\label{Zest_2}
\end{equation}
\end{theorem}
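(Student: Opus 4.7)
The strategy is to combine the pathwise sparse domination for $Z$ from Theorem \ref{T: sparse decomposition} with the weighted estimate for the martingale sparse operator in Theorem \ref{Theorem_SX}, specialised to $p=2$.

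First, by Theorem \ref{T: sparse decomposition} there exist a sparse sequence $\{T^j\}_{j\geqslant 0}$ and nested sets $E_j=\{T^j<\infty\}$ such that
\[
Z^*(\omega)\leqslant 8\,S(\widetilde{X})(\omega)
=8\sum_{j=0}^{\infty} \mathbb{E}(\widetilde{X}\mid\mathcal{F}_{T^j})(\omega)\,\chi_{E_j}(\omega)\qquad\text{a.s.}
\]
Taking $L^2(w)$ norms, the theorem reduces to the bound $\|S(\widetilde{X})\|_{L^2(w)}\leqslant C\,Q_2(w)\,\|\widetilde{X}\|_{L^2(w)}$ for some absolute constant $C$.

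The key observation is that $S(\widetilde{X})$ is actually a genuinely martingale sparse operator in disguise. Since $\widetilde{X}$ is a non-negative, uniformly integrable submartingale, its terminal closure $\widetilde{X}_\infty$ exists in $L^1$, and the process
\[
\bar{X}_t:=\mathbb{E}(\widetilde{X}_\infty\mid\mathcal{F}_t),\qquad t\geqslant 0,
\]
is a non-negative uniformly integrable martingale with $\bar{X}_\infty=\widetilde{X}_\infty$. Optional stopping at each $T^j$ gives $\bar{X}_{T^j}=\mathbb{E}(\widetilde{X}_\infty\mid\mathcal{F}_{T^j})$, hence
\[
S(\widetilde{X})=\sum_{j=0}^{\infty}\bar{X}_{T^j}\chi_{E_j}=\mathcal{S}(\bar{X}),
\]
which is the martingale sparse operator attached to $\bar{X}$ and the very same sparse sequence $\{T^j\}_{j\geqslant 0}$.

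With this identification, Theorem \ref{Theorem_SX} at $p=2$ (for which $\max\{1,1/(p-1)\}=1$) yields
\[
\|S(\widetilde{X})\|_{L^2(w)}=\|\mathcal{S}(\bar{X})\|_{L^2(w)}\leqslant C_2\,Q_2(w)\,\|\bar{X}\|_{L^2(w)}=C_2\,Q_2(w)\,\|\widetilde{X}\|_{L^2(w)},
\]
where the last equality uses $\bar{X}_\infty=\widetilde{X}_\infty$ together with the convention that $L^p(w)$ norms of uniformly integrable (sub)martingales are measured at the terminal random variable. Multiplying by the factor $8$ from the sparse domination produces the theorem with $\widetilde{C}_2=8C_2$.

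The main obstacle I anticipate is purely bookkeeping and lies in the weight hypothesis: the definition of a weight earlier in the paper requires $w$ to be a martingale, whereas Theorem \ref{Z w est} allows $w$ to be a submartingale. The formula defining $Q_2(w)$ extends verbatim and one must verify that the proof of Theorem \ref{Theorem_SX} at $p=2$ goes through with a submartingale weight; this should use only $A_2$-type testing against $w_\tau$ at stopping times, which remains meaningful and finite by hypothesis. No new idea beyond those used for Theorem \ref{Theorem_SX} is needed, so this step is straightforward but must be checked carefully.
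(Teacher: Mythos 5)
Your proposal is correct and follows essentially the same route as the paper: the paper's own treatment says to re-run the $L^2$ sparse estimate for $\mathcal{S}$ with the sampled values $w_{T^j}$ (and implicitly $\widetilde{X}_{T^j}$) replaced by the projections $\mathbb{E}(w\mid\mathcal{F}_{T^j})$ and $\mathbb{E}(\widetilde{X}\mid\mathcal{F}_{T^j})$, which is exactly what your identification $S(\widetilde{X})=\mathcal{S}(\bar X)$ with $\bar X_t=\mathbb{E}(\widetilde{X}_\infty\mid\mathcal{F}_t)$ packages cleanly. You correctly flag the only genuine point to check, namely that the $p=2$ proof of Theorem \ref{Theorem_SX} uses the weight only through $\mathbb{E}(w\mid\mathcal{F}_{T^j})$ and $\mathbb{E}(w^{-1}\mid\mathcal{F}_{T^j})$ rather than through the martingale property of $w$, so with the projection-based characteristic $Q_2^{\mathcal{F}}$ it goes through verbatim for submartingale weights; this is precisely the paper's (equally terse) remark.
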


In the above theorem, $Q_2(w)$ denotes the $A_2$ characteristic of the weight $w$ on a filtered space.

\section{Maximal Function of $X$}

In this section we prove Theorem \ref{Theorem_XstarX} via the modification of a simple and direct
 domination argument for the maximal function. See for example the argument by Lerner {\cite{Ler2008a}} in a different context. Notice that the obtained norm estimate is the same as that in 
 Buckley's text \cite{Buc1993a} on homogeneous spaces. Buckley's proof enjoyed an extension to some martingales with certain restrictive homogeneity 
 conditions in the presence of jumps - this was needed because of the failure of the openness of the $A_p$ class in the general context. The argument here 
 does not rely on the openness condition of the $A_p$ classes and is therefore providing the estimate in full generality and in addition recovers the correct growth with the $A_p$ characteristic for the norm estimate. The argument consists of a trajectory-wise domination of the maximal operator of $X$ and the use of Doob's inequality. We write $u=w^{\frac{1}{p-1}}$ for the dual weight and recall that the $A_p$ characteristic is $\sup_{\tau}\| u_{\tau}^{p-1}w_{\tau}\|_{\infty}$.
 We also introduce the notations 

 \begin{equation}\label{weighted_expectation} 
\mathbbm{E} [\,\cdot \; w ]
  =\mathbbm{E}_w [\,\cdot \,] \mathbbm{E} [w ] ,
   \end{equation}
  \begin{equation}\label{weighted_sampling} 
   \cdot_{\,\tau, w} =\mathbbm{E}_w [\, \cdot \;|\, 
\mathcal{F}_{\tau}],
  \end{equation}
\begin{equation}\label{notation_weightedmax} 
X_{ u}^{\ast}(\omega)=\sup_{t}| X_{t,u}(\omega)|.
\end{equation}

 There holds for all $t\geqslant 0$ and all $p>1$ 
\begin{eqnarray*}
  | X_t |^{p - 1} 
  & \leqslant &
   \left(\mathbbm{E} \left[| X |\, | \, \mathcal{F}_t\right]\right)^{p -1}\ =  
  \left(\mathbbm{E} \left[| X | u^{-1}u \, | \, \mathcal{F}_t\right]\right)^{p - 1}\\
  & = & 
  \left(\mathbbm{E}_u \left[| X | u^{-1} \, | \, \mathcal{F}_t\right]\right)^{p - 1} \left(\mathbbm{E}
  \left[u \, | \, \mathcal{F}_t\right]\right)^{p - 1}\\
  & \leqslant & 
  Q_p (w) \left(\mathbbm{E} \left[w  \, | \, \mathcal{F}_t\right]\right)^{- 1}
  \left(\mathbbm{E}_u \left[| X | u^{-1} \, | \,  \mathcal{F}_t\right]\right)^{p - 1}.
\end{eqnarray*}
Now observe that 
\begin{eqnarray*}
\left(\mathbbm{E}_u \left[| X | u^{-1}\, |\, \mathcal{F}_t\right]\right)^{p - 1}
&=&
\mathbbm{E} \left[\left(\mathbbm{E}_u [| X | u^{-1} \,  | \, \mathcal{F}_t]\right)^{p - 1} | \,\mathcal{F}_t\right] \\
&\leqslant& 
\mathbbm{E} \left[\left(\left(| X | u^{-1}\right)_{^{} u}^{\ast}\right)^{p - 1}w^{-1} w \, | \, \mathcal{F}_t\right]\\
&=&
\mathbbm{E}_w\left[ \left(\left(| X | w\right)_{^{} u}^{\ast}\right)^{p - 1}w^{-1}  \, | \, \mathcal{F}_t\right]  
\mathbbm{E}\left[ w \, | \, \mathcal{F}_t \right] .
\end{eqnarray*} 
Then get for all $t$
\[ | X_t |^{p - 1} \leqslant Q_p (w) \left(\left(\left(| X | u^{- 1}\right)^{\ast}_u\right)^{p - 1} w^{-1} \right)^{\ast}_w,\]
and therefore 
$$(X^{\ast })^p\leqslant \left( Q_p (w) \left(\left(\left(| X | u^{- 1}\right)^{\ast}_u\right)^{p - 1} w^{-1} \right)^{\ast}_w \right)^{\frac{p}{p-1}}.$$
Thus
\begin{eqnarray*}
  \mathbbm{E} [(X^{\ast})^p w] 
  & \leqslant & Q_p (w)^{\frac{p} {p - 1}} 
  \mathbbm{E}\left[\left(\left(\left(\left(| X | u^{- 1}\right)^{\ast}_u\right)^{p - 1} w^{- 1}\right)^{\ast}_w\right)^{p'} w\right]\\
  & = & Q_p (w)^{\frac{p} {p - 1}} 
  \mathbbm{E}_w \left[\left(\left(\left(\left(| X | u^{- 1}\right)^{\ast}_u\right)^{p -1} w^{- 1}\right)^{\ast}_w\right)^{p'}\right] \mathbbm{E} [w]\\
  & \leqslant & Q_p (w)^{\frac{p} {p - 1}}  \left( \frac{p'}{p' - 1} \right)^{p'}
  \mathbbm{E}_w \left[(((| X | u^{- 1})^{\ast}_u)^{p - 1} w^{- 1})^{p'}\right]
  \mathbbm{E} [w]\\
  & = & Q_p (w)^{\frac{p} {p - 1}}  \left( \frac{p'}{p' - 1} \right)^{p'} \mathbbm{E}
  \left[(((| X | u^{- 1})^{\ast}_u)^{p - 1} w^{- 1})^{p'} w\right]\\
  & = & Q_p (w)^{\frac{p} {p - 1}} \left( \frac{p'}{p' - 1} \right)^{p'} \mathbbm{E}
 \left[((| X | u^{- 1})^{\ast}_u)^p u\right]\\
  & = & Q_p (w)^{\frac{p} {p - 1}}  \left( \frac{p'}{p' - 1} \right)^{p'}
  \mathbbm{E}_u \left[((| X | u^{- 1})^{\ast}_u)^p\right] \mathbbm{E} [u]\\
  & \leqslant & Q_p (w)^{\frac{p} {p - 1}} \left( \frac{p'}{p' - 1} \right)^{p'}
  \left( \frac{p}{p - 1} \right)^p \mathbbm{E}_u \left[(| X | u^{- 1})^p\right]
  \mathbbm{E} [u]\\
  & = & Q_p (w)^{\frac{p} {p - 1}}  \left( \frac{p'}{p' - 1} \right)^{p'} \left(
  \frac{p}{p - 1} \right)^p \mathbbm{E} \left[(| X | u^{- 1})^p u\right]\\
  & = & Q_p (w)^{\frac{p} {p - 1}}  \left( \frac{p'}{p' - 1} \right)^{p'} \left(
  \frac{p}{p - 1} \right)^p \mathbbm{E} \left[| X |^p w\right].
\end{eqnarray*}
Raising to the power $1/p$ gives the desired estimate in Theorem \ref{Theorem_XstarX} with $C_{p}=\frac{p^{p'}}{p-1}$.

\

\section{Sparse domination of the process $Y$. }

In this section we prove Theorem \ref{Theorem_sparsedomination}.  Without loss of generality $X$ has non-zero closure and $\|X\|_1>0$. 
We will use the following preliminary weak type estimate due to Wang
{\cite{Wang}}.  

\begin{theorem}[Wang] Let $Y$ differentially subordinate to $X$ then for all $\lambda >0$ there holds
  \[ \mathbbm{P} (\{ \omega \in \Omega : (|Y|  + |X|)^{\ast}
     (\omega) \geqslant \lambda \}) \leqslant \frac2{\lambda} \| X \|_1 . \]
\end{theorem}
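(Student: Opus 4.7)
The plan is to prove Wang's weak-type inequality by the Burkholder-Wang special function method, adapted to uniformly integrable Hilbert-space-valued c\`adl\`ag martingales. By scaling one may reduce to $\lambda=1$. Introduce the stopping time
\[
\tau = \inf\{t \geqslant 0: |X_t|+|Y_t| \geqslant 1\};
\]
by right-continuity $\mathbbm{P}\bigl((|X|+|Y|)^{*} \geqslant 1\bigr) = \mathbbm{P}(\tau < \infty)$, and on $\{\tau<\infty\}$ one has $|X_\tau|+|Y_\tau|\geqslant 1$ regardless of whether $\tau$ is a jump time.

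The central ingredient is Burkholder's function
\[
U(x,y)=\begin{cases}|y|^2-|x|^2,&|x|+|y|<1,\\ 1-2|x|,&|x|+|y|\geqslant 1,\end{cases}
\]
on $\mathcal{H}\times\mathcal{H}$. I would verify three properties: (a) the pointwise majorization $\chi_{\{|x|+|y|\geqslant 1\}}\leqslant U(x,y)+2|x|$; (b) the initial bound $U(x,y)\leqslant 0$ whenever $|y|\leqslant|x|$, which applies at $t=0$ since differential subordination forces $|Y_0|\leqslant|X_0|$; and (c) a diagonal concavity inequality
\[
U(x+h,y+k)\leqslant U(x,y)+\langle A(x,y),h\rangle+\langle B(x,y),k\rangle\quad\text{whenever }|k|\leqslant|h|,
\]
for a specific choice of sub-gradient $(A,B)$. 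Properties (a) and (b) are direct case checks; (c) is the technical heart and is what makes the method robust to jumps.

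Next I would apply It\^o's formula to $U(X_t,Y_t)$, localize to kill the local-martingale piece, and use (c) together with the pointwise jump control $|\Delta Y_t|\leqslant|\Delta X_t|$ and the continuous-bracket domination $d[Y,Y]^c\leqslant d[X,X]^c$ (both consequences of differential subordination) to dominate all remaining drift contributions by zero. This yields
\[
\mathbbm{E}[U(X_{t\wedge\tau},Y_{t\wedge\tau})]\leqslant \mathbbm{E}[U(X_0,Y_0)]\leqslant 0.
\]
Passing $t\to\infty$ by uniform integrability of $X$, applying (a) at $\tau$, and using $|X_\tau|\leqslant \mathbbm{E}[|X_\infty|\mid\mathcal{F}_\tau]$ to absorb the $2|x|$ term, one arrives at $\mathbbm{P}(\tau<\infty)\leqslant 2\|X\|_1$, which rescales to the desired inequality.

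The main obstacle is verifying property (c). The c\`adl\`ag setting forces the concavity to hold along entire chords $(x,y)\to(x+h,y+k)$ with $|k|\leqslant|h|$ rather than only infinitesimally, and the sub-gradient must be chosen carefully at the crease $\{|x|+|y|=1\}$, particularly when the two endpoints lie on opposite sides. This chord analysis in the Hilbert-space-valued jump setting is precisely Wang's contribution beyond Burkholder's discrete-time framework.
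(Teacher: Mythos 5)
Your proof is correct and follows the standard Burkholder--Wang Bellman-function method: the function $U$, the pointwise majorization, the initial bound, the chord inequality for jumps, and the It\^o-formula argument with optional stopping are exactly the right ingredients, and the concluding step $\mathbbm{E}|X_\tau|\leqslant\|X\|_1$ is the correct way to finish. The paper does not reproduce a proof of this theorem but cites Wang \cite{Wang} for it; however, it employs precisely the same function $U$, an equivalent minorant $V(x,y)$, and the same It\^o decomposition into $I_0,I_1,I_2,I_3$ (with the jump inequality) in its proof of the generalization Lemma \ref{L: weak type}, and your argument is the special case obtained there by taking $A\equiv 0$, the matrix process identically zero, and $a=0$, so that $\widetilde X = X$ and $Z = Y$.
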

Notice that it trivially implies
\begin{corollary}[Wang]
Let $Y$ differentially subordinate to $X$ then  for all $\lambda >0$ there holds
  \[ \mathbbm{P} (\{ \omega \in \Omega : Y^{\ast} (\omega) \vee X^{\ast}
     (\omega) \geqslant \lambda \}) \leqslant \frac2{\lambda} \| X \|_1 . \]
\end{corollary}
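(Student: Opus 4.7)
The corollary is an immediate consequence of Wang's theorem displayed just above, so the plan is simply to establish a pointwise (trajectory-wise) comparison of the maximal functions involved and then use monotonicity of $\mathbbm{P}$. No new probabilistic tools are needed; the differential subordination hypothesis is only used indirectly, through the invocation of the preceding theorem.

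First I would fix $\omega \in \Omega$ and observe that for every $t \geqslant 0$,
\[
|Y_t(\omega)| \leqslant |Y_t(\omega)| + |X_t(\omega)|
\quad\text{and}\quad
|X_t(\omega)| \leqslant |Y_t(\omega)| + |X_t(\omega)|.
\]
Taking suprema over $t$ on both sides yields $Y^{\ast}(\omega) \leqslant (|Y|+|X|)^{\ast}(\omega)$ and $X^{\ast}(\omega) \leqslant (|Y|+|X|)^{\ast}(\omega)$, and consequently
\[
Y^{\ast}(\omega) \vee X^{\ast}(\omega) \leqslant (|Y|+|X|)^{\ast}(\omega).
\]

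This pointwise inequality immediately yields the inclusion of super-level sets
\[
\{\omega \in \Omega : Y^{\ast}(\omega) \vee X^{\ast}(\omega) \geqslant \lambda\}
\;\subseteq\;
\{\omega \in \Omega : (|Y|+|X|)^{\ast}(\omega) \geqslant \lambda\}
\]
for every $\lambda > 0$. Applying $\mathbbm{P}$ and using Wang's theorem on the right-hand side gives
\[
\mathbbm{P}(Y^{\ast} \vee X^{\ast} \geqslant \lambda)
\leqslant \mathbbm{P}((|Y|+|X|)^{\ast} \geqslant \lambda)
\leqslant \frac{2}{\lambda}\|X\|_1,
\]
which is exactly the desired estimate. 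There is essentially no obstacle here: the argument is a single monotonicity step on top of a crude triangle-type bound, and the only substantive ingredient (the weak-$(1,1)$ bound for $(|Y|+|X|)^{\ast}$) is taken as already proved.
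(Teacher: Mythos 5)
Your argument is correct and is exactly the ``trivial'' implication the paper alludes to (the paper simply states the corollary follows trivially from Wang's theorem without writing out the pointwise comparison). Nothing to add.
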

It also implies the following.
\begin{lemma} \label{Theorem_WangWeakType}
Let $Y$ differentially subordinate to $X$ then for all $A\in \mathcal{F}_0$ and for all $\lambda >0$ there holds
  \[ \mathbbm{P} (\{ \omega \in A : Y^{\ast} (\omega) \vee X^{\ast}
     (\omega) > \lambda | X |_0(\omega) \}) \leqslant \frac2{\lambda} \mathbb{P}(A) . \]
\end{lemma}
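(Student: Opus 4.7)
The plan is to reduce the lemma directly to the preceding Corollary of Wang by rescaling the pair $(X,Y)$ by an $\mathcal{F}_0$-measurable weight tailored to $A$. Concretely, I would set
\[
Z := \frac{\chi_{A}}{|X|_0}\,\chi_{\{|X|_0 > 0\}}, \qquad \widetilde{X}_t := Z\,X_t, \qquad \widetilde{Y}_t := Z\,Y_t.
\]
Since $Z \geq 0$ is $\mathcal{F}_0$-measurable, hence $\mathcal{F}_t$-measurable for every $t \geq 0$, the rescaled processes $\widetilde{X}$ and $\widetilde{Y}$ remain c\`adl\`ag adapted martingales, and the identity
\[
[\widetilde{X},\widetilde{X}]_t - [\widetilde{Y},\widetilde{Y}]_t = Z^2\bigl([X,X]_t - [Y,Y]_t\bigr)
\]
shows that differential subordination is preserved.

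The crucial step is the $L^1$ bound on $\widetilde{X}$. Reading $|X|_t$ as the non-negative martingale closure $\mathbb{E}[|X_\infty|\mid\mathcal{F}_t]$, as is consistent with the reading of $|X|_{T^j}$ in the sparse operator $\mathcal{S}$, the tower property gives
\[
\|\widetilde{X}\|_1 = \mathbb{E}\bigl[Z\,|X_\infty|\bigr] = \mathbb{E}\bigl[Z\,\mathbb{E}[|X_\infty|\mid\mathcal{F}_0]\bigr] = \mathbb{E}[Z\,|X|_0] = \mathbb{P}\bigl(A \cap \{|X|_0 > 0\}\bigr) \leq \mathbb{P}(A).
\]
In particular $\widetilde{X}$, and analogously $\widetilde{Y}$, is uniformly integrable, and the preceding Corollary of Wang applied to $(\widetilde{X},\widetilde{Y})$ yields
\[
\mathbb{P}\bigl(\widetilde{Y}^{\ast} \vee \widetilde{X}^{\ast} \geq \lambda\bigr) \leq \frac{2}{\lambda}\,\|\widetilde{X}\|_1 \leq \frac{2}{\lambda}\,\mathbb{P}(A).
\]

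To translate back: since $Z \geq 0$ one has $\widetilde{X}^{\ast} = Z\,X^{\ast}$ and $\widetilde{Y}^{\ast} = Z\,Y^{\ast}$, so on $A \cap \{|X|_0 > 0\}$ (where $Z = 1/|X|_0$) the event $\{\widetilde{Y}^{\ast} \vee \widetilde{X}^{\ast} > \lambda\}$ coincides with $\{Y^{\ast} \vee X^{\ast} > \lambda\,|X|_0\}$, and outside $A$ the rescaled processes vanish and do not contribute. The leftover set $A \cap \{|X|_0 = 0\}$ is handled by observing that $\mathbb{E}[|X_\infty|\mid\mathcal{F}_0] = 0$ forces $X_\infty = 0$ a.s., hence $X \equiv 0$ by uniform integrability, and then differential subordination forces $Y \equiv 0$ as well, so the event is null there. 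The main delicate point in the plan is the manipulation of $|X|_0$ as $\mathbb{E}[|X_\infty|\mid\mathcal{F}_0]$, which is what lets the rescaling weight $Z = \chi_A/|X|_0$ produce a rescaled martingale with $\|\widetilde{X}\|_1 \leq \mathbb{P}(A)$; with the naive pointwise reading $|X_0|$ conditional Jensen runs the wrong way and the analogous identity becomes only $\mathbb{E}[Z\,|X_0|] \leq \mathbb{E}[Z\,|X_\infty|]$, and the bound $\|\widetilde{X}\|_1 \leq \mathbb{P}(A)$ cannot be recovered from the Corollary.
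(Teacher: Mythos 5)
Your proof is correct and essentially identical to the paper's: both rescale $(X,Y)$ by the $\mathcal{F}_0$-measurable weight $\chi_{A\cap\{|X|_0>0\}}/|X|_0$, observe that differential subordination is preserved, bound $\|\widetilde X\|_1 \leqslant \mathbbm{P}(A)$, and apply Wang's corollary, with the $\{|X|_0=0\}$ part handled by the absorption argument. Your explicit articulation of the reading $|X|_0 = \mathbbm{E}[|X_\infty|\mid\mathcal{F}_0]$ is exactly the reading the paper implicitly uses in the step $\mathbbm{E}\bigl[\chi_{A\cap\{|X|_0>0\}}\mathbbm{E}[|X|\mid\mathcal{F}_0]/|X|_0\bigr]=\mathbbm{P}(A)$, so you have correctly pinpointed why the computation closes.
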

\begin{proof}
Let us write 
$$\tilde{X} = \chi_{A \cap \{ |X|_0>0\}}X/|X|_0 \text{ and } \tilde{Y} = \chi_{A\cap \{|X|_0>0\}}Y/|X|_0.$$
Notice first that $|X|_0$ is measurable in $\mathcal{F}_0$. So the pair $(\tilde{X},\tilde{Y})$ are adapted martingales under differential subordination. 
Further, notice that almost surely $|X|_0(\omega) =0 \Rightarrow |X|_t(\omega) =0$ for all $t\geqslant 0$. In particular, all future 
increments of $X$ are zero and thus $|X_t(\omega)| \vee |Y_t(\omega)| =0$ for all  $t>0$. Thanks to this and Wang's Theorem we can estimate for all $\lambda >0$
\begin{align*} 
\mathbbm{P} (\{ \omega & \in A :
   Y_t^{ \ast} \vee X_t^{ \ast} > \lambda|X|_0 \})\\
 &= \mathbbm{P} (\{ \omega \in A\cap \{ |X|_0>0 \} : 
   Y_t^{ \ast} \vee X_t^{ \ast} > \lambda|X|_0 \}) \\
 &=  \mathbbm{P} (\{ \omega \in \Omega :
   \tilde{Y}_t^{ \ast} \vee \tilde{X}_t^{ \ast} > \lambda \})\\
& \leqslant  \frac{2}{\lambda} \| \tilde{X} \|_1  
=   \frac{2}{\lambda}  \mathbbm{E}  | \tilde{X} | \\
&= \frac{2}{\lambda}
   \mathbbm{E} \left[ \frac{\chi_{A\cap \{ |X|_0>0\}}}{| X |_0}\mathbbm{E} [| X | \, |\,  \mathcal{F}_0]
   \right] \\
&= \frac{2}{\lambda} \mathbb{P}(A). 
\end{align*}   

\end{proof}

\paragraph{Stopping procedure.}
The first step for the proof of Theorem  \ref{Theorem_sparsedomination}  is the stopping procedure. 
Let us select an increasing sequence $\{T^j\}_{j\ge0}$ of stopping times associated to the pair $X$ and $Y$ inductively. 
To do so, start with 
$$T^0(\omega)=\inf\{t>0: |X|_t(\omega)>0\},$$ that is, 
$T^0(\omega)= 0$ if $|X|_0(\omega) >0$ and $\infty$ otherwise. Set   
$$E_0=\{T^0<\infty \}=\{ \omega \in \Omega: |X|_0(\omega)>0\}.$$
Notice that since $\|X\|_1>0$ we have $\mathbb{P}(E_0)>0$ and  $|X|_{T_0}>0$ on $E_0$. 
Let us define $\mathfrak{F}^0=(\mathcal{F}_t^0)_{t\geqslant 0}$ by $\mathcal{F}^0_t =\mathcal{F}_{T^0 \vee t}$
and consider the martingales 
$$Y^0_t =\chi_{E_0}Y_t  \text{ and } X^0_t = \chi_{E_0}X_t .$$
   
Now we proceed with the iteration. Assume $n\geqslant 1$.
Assume we have filtrations $\mathfrak{F}^0,...,\mathfrak{F}^{n-1}$, an increasing sequence of 
stopping times $T^0,...,T^{n-1}$ with associated nested sets $E_0,..., E_{n-1}$ measurable in $\mathcal{F}_0,...,\mathcal{F}_{n-1}$ respectively and pairs of martingales 
$(X^0,Y^0),...,(X^{n-1},Y^{n-1})$ under differential subordination. 

We set the stopping time $T^n (\omega)$ by
\[ T^n (\omega) = \inf \{ t > 0 : Y^{n-1}_t  (\omega)\vee X^{n-1}_t (\omega)>4|X|_{T^{n-1}}(\omega) \}. \]
Notice that $|X|_{T^{n-1}}(\omega) =0 $ implies $|X|_t(\omega) =0$ for all $t\geqslant T^{n-1}$. In particular, all future 
increments of $X$ are zero and thus $|X^{n-1}_t(\omega)| \vee |Y^{n-1}_t(\omega)| =0$ for all $t>T^{n-1}$. There holds thus $T^n(\omega)= \infty$ if $|X|_{T^{n-1}}(\omega) =0$, and
\[ E_n = \{ T^n<\infty \}=\{ \omega \in \Omega : Y^{n-1 \ast}(\omega) \vee X^{n-1 \ast}(\omega) > 4|X|_{T^{n-1}}(\omega) \} .\]
 Let
the filtration $\mathfrak{F}^n = (\mathcal{F}_t^n)_{t \geqslant 0} :=
(\mathcal{F}_{T^n \vee t})_{t \geqslant 0}$. Observe that $E_n \in
\mathcal{F}_{T^n}=\mathcal{F}_0^n$.

\paragraph{Choice of the iterates.} We now take care of the foot of the next pair of martingales, particularly in the presence of a jump at a stopping time. 
By differential subordination we can compare the jumps almost surely 
$$
|\Delta Y^{n-1}_{T^n}| := \left| Y^{n-1}_{T^n} (\omega) - Y^{n-1}_{T^n_-}(\omega) \right| \leqslant \left| X^{n-1}_{T^n} = (\omega) - X^{n-1}_{T^n_-} (\omega) \right| =: |\Delta X^{n-1}_{T^n}|.$$ 
Thus, there exists a linear operator $r_{T^n} (\omega) \in \mathcal{F}_{T^n}$ such that $| r_{T^n} |
\leqslant 1$ and 
$$Y^{n-1}_{T^n} (\omega) - Y^{n-1}_{T^n_-} (\omega) = r_{T^n} (\omega)  \left(X^{n-1}_{T^n}(\omega) - X^{n-1}_{T^n_-} (\omega)\right).$$ 

In particular, one sets $r_{T^n} =0$ if $\Delta X^{n-1}_{T^n}=0$. Define now the new iterates $(X_t^n,Y_t^n)$ given $(Y_t^{n-1},Y_t^{n-1})$ as
\begin{eqnarray*}
Y_t^n 
&=& \chi_{E_n} \left(\mathbbm{E} [Y^{n-1} \, |\, \mathcal{F}^n_t] - Y^{n-1}_{T^n} + r_{T^n} X^{n-1}_{T^n}\right)  \\
&=& \chi_{E_n} \left( r_{T^n} X^{n-1}_{T^n} + \int^{t \vee T^n}_{T^n_+} \mathd Y^{n-1}_u \right) 
\end{eqnarray*}
and
\begin{eqnarray*}
 X_t^n &=& \chi_{E_n} \mathbbm{E} [X^{n-1} \; | \; \mathcal{F}^n_t]  \\
 &=& \chi_{E_n}  \left( X^{n-1}_{T^n} + \int^{t \vee T^n}_{T^n_+} \mathd X^{n-1}_u \right).
\end{eqnarray*}
By induction, these are martingales with respect to $\mathfrak{F}^n$ and $Y^n$ is
differentially subordinate to $X^n$.

\paragraph{Sparsity.} We prove that the resulting selection $\{T^ j\}_{j\geqslant 0}$ is sparse. Let $n\geqslant 0$. 
Let $A^{n}\subset E_{n}$ with $A^{n} \in \mathcal{F}_{T^{n}}$. Then $\chi_{A^{n}}Y^{n}_t $ is differentially 
subordinate to $\chi_{A^{n}}X^{n}_t $ in $\mathfrak{F}^{n}$.  Notice that $|X|_{T^{n}}=|X^{n}|_{0}$ since $\mathcal{F}^{n}_0=\mathcal{F}_{T^{n}}$.

Thanks to Lemma \ref{Theorem_WangWeakType} applied to $\mathcal{F}=\mathcal{F}^{n}$, $X_t=\chi_{E_n} X_t^n$, $Y_t=\chi_{E_n} Y_t^n$, and $A=A^n$  there holds
$$\mathbbm{P} (A^{n} \cap E_{n+1})  = \{ \omega \in A^n : Y^{n \ast}(\omega) \vee X^{n \ast}(\omega) > 4 |X|_{T^n}(\omega)\} \leqslant  \frac{1}{2} \mathbb{P}(A^{n}).$$

\medskip

\paragraph{Domination.} We now prove the domination estimate. Indeed, we show that for all $n\geqslant 0$ there holds almost surely
\begin{equation}\label{estimate_inductive} Y^{\ast}\leqslant \sum_{j=0}^{n-1}8|X|_{T^j}\chi_{E_j}+Y^{n \ast}\quad (\mathcal{E}_n).\end{equation}
This implies the required domination because for the support of $Y^{n \ast}$ there holds
  $$\mathbb{P}\left(\text{supp} (Y^{n \ast})) \leqslant \mathbb{P}(E_{n}\right) \to 0 \text{ as } n\to \infty.$$
For $n=0$, the estimate $\mathcal{E}_0$ in (\ref{estimate_inductive}) follows from $$Y^{\ast}=Y^{\ast}\chi_{\Omega \setminus E_0} +  Y^{\ast}\chi_{E_0} = 0+ Y^{0 \ast}.$$
Assuming now $(\mathcal{E}_n)$ holds in (\ref{estimate_inductive}), we pass to $(\mathcal{E}_{n+1})$. Since $Y^{n \ast}$ is supported on $E_n$ we split 
$$Y^{n \ast} =  Y^{n \ast}\chi_{E_n \setminus E_{n+1}}+   Y^{n \ast}\chi_{E_{n+1}}.$$ 
In the complement of $E_{n+1}$ we have $Y^{n \ast} \leqslant 4 |X|_{T^n}\chi_{E_n}\leqslant 8 |X|_{T^n}\chi_{E_n}+Y^{n+1 \ast}$. In $E_{n+1}$ we have 
$$Y^{n \ast} (\omega)=\max \left\{ \sup_{t<T^{n+1}(\omega)} |Y^n_t(\omega)|, \sup_{t\geqslant T^{n+1}(\omega)} |Y^n_{t}(\omega)|   \right\}.$$
The first supremum is bounded by $4|X|_{T^n}\chi_{E_n}$ and for the second supremum we write trajectory-wise for $t\geqslant T^{n+1}(\omega)$
\begin{align*}
Y^n_t &=  Y^n_0 + \int^{T^{n+1}}_{0_+} \mathd Y^n_u + \int^t_{T^{n+1}_+} \mathd Y^n_u  \\
 &= Y^n_0 + \int^{T^{n+1}_-}_{0_+} \mathd Y^n_u 
+  (Y_{T^{n+1}}-Y_{T^{n+1}_-})  + \int^t_{T^{n+1}_+} \mathd Y^n_u  \\
&=    \left( Y^n_0 + \int^{T^{n+1}_-}_{0_+} \mathd Y^n_u \right) - (r_{T^{n+1}} X^n_{T^{n+1}_-}) +
   \left( r_{T^{n+1}}  X^n_{T^{n+1}} + \int^t_{T^{n+1}_+} \mathd Y^n_u \right).
\end{align*}
We estimate in $E_{n+1}$ for $t\geqslant T^{n+1}$  
$$|Y^ n_t|  \leqslant  \left|Y^n_0 + \int^{T^{n+1}_-}_{0_+} \mathd Y^n_u \right| + | X^n_{T^{n+1}_-}| +  \left| r_{T^{n+1}} X_{T^{n+1}} + \int^t_{T^{n+1}_+} \mathd Y^n_u \right|.$$
      
The first two summands are each controlled by $4 | X |_{T^n}\chi_{E_n}$ by the definition of
the stopping time $T^{n+1}$. Last, observe that the third term on $E_{n+1}$ is
dominated by $ Y^{n+1 \ast}$.

Gathering the information, there holds almost surely
\[ Y^{n \ast} \leqslant 8 \, | X |_{T^n}\chi_{E_n} + Y^{n+1 \ast} , \] 
and thus
$$Y^{\ast} \leqslant \sum_{j=0}^{n-1} 8\,|X|_{T^j}\chi_{E_j} + Y^{n \ast} \leqslant \sum_{j=0}^{n} 8\,|X|_{T^j}\chi_{E_j} +Y^{n+1 \ast} .$$
The claim $(\mathcal{E}_{n+1})$ in (\ref{estimate_inductive}) is proved and the sparse domination in Theorem \ref{Theorem_sparsedomination} follows. 

\

\subsection{Maximal function of $Y$}

In this section we prove Theorem \ref{Theorem_SX} and Theorem \ref{Theorem_YstarX}. We first prove Theorem \ref{Theorem_SX} for $p=2$ and then obtain the result for other values of $p$ via extrapolation. We then deduce Theorem \ref{Theorem_YstarX} via the sparse domination from Theorem \ref{Theorem_sparsedomination}. 

In order to prove Theorem \ref{Theorem_SX} 
for the case $p = 2$, it suffices to show that there exists $C_{2}\geqslant 0$ such that for all $w \in A_2$ and all functions $X \in L^2(w)$ there holds
\begin{equation} \label{sparse_weighted_estimate}
\| \mathcal{S} ( {X} ) \|_{L^2 (w)} \leqslant C_{2} Q_2 (w) \|
   {X} \|_{L^2 (w)} . 
\end{equation}   
This means 
\[ \left(\mathbbm{E} [(\mathcal{S} ( {X} ))^2 w]\right)^{\frac12} \leqslant C_{2} Q_2
   (w)\left( \mathbbm{E} [| {X} |^2 w] \right)^{\frac12}. \]
Dualizing and writing $u=w^{-1}$, we reduce to the estimate
\[ \mathbbm{E} [\mathcal{S} ({X} ) | {Z} |] \leqslant
   C_{2} Q_2 (w) \mathbbm{E} [| {X} |^2 w]^{\frac12} \mathbbm{E} [|{Z} |^2 u]^{\frac12}. \]
We recall the notations (\ref{weighted_expectation}), (\ref{weighted_sampling}) 
\begin{equation*} %\label{weighted_expectation} 
\mathbbm{E} [\,\cdot \; w ]
  =\mathbbm{E}_w [\,\cdot \,] \mathbbm{E} [w ] ,
   \end{equation*}
  \begin{equation*}  %\label{weighted_sampling} 
   \cdot_{\,\tau, w} =\mathbbm{E}_w [\, \cdot \;|\, 
\mathcal{F}_{\tau}].
  \end{equation*}

Then, we write
$| \tilde{X} | u = | {X} |$ and $| \tilde{Z} | w = | {Z} |$ then suppressing the $\tilde \cdot$ again, it suffices
to prove
\[ \mathbbm{E} [\mathcal{S} (| X | u) | Z | w] 
\leqslant C_{2} Q_2 (w)
   \mathbbm{E} [w]^{\frac12} \mathbbm{E} [u]^{\frac12} 
   \mathbbm{E}_u [| X |^2]^{\frac12} \mathbbm{E}_w [| Z |^2]^{\frac12} . \]
Now, we calculate the left hand side
\begin{eqnarray*}
 \mathbbm{E} \left[ \sum_j (| X | u)_{T^j} \chi_{E_j} | Z | w \right]
   &=&\mathbbm{E} \left[ \sum_j \mathbbm{E} [(| X | u)_{T^j} \chi_{E_j} | Z | w \;
   | \; \mathcal{F}_{T^j}] \right] \\
   &=&\mathbbm{E} \left[ \sum_j (| X | u)_{T^j} (|
   Z | w)_{T^j} \chi_{E_j} \right] \\
  &\leqslant& Q_2 (w) \mathbbm{E} \left[ \sum_j | X |_{T^j, u} | Z |_{T^j, w} \chi_{E_j}
   \right]. 
 \end{eqnarray*}  
In the above calculation, we used the notations (\ref{weighted_expectation}) and (\ref{weighted_sampling}) and noticed that
\begin{equation*}
(| Z | w)_{T^j} \chi_{E_j} =\mathbbm{E} [| Z | w\, |\, \mathcal{F}_{T^j}]
   \chi_{E_j} =\mathbbm{E}_w [| Z | | \mathcal{F}_{T^j}] \mathbbm{E} [w \,|\,
   \mathcal{F}_{T^j}] \chi_{E_j} ,
   \end{equation*}
and similarly for the other term. We recalled that by the $A_2$ condition
\[ \| \mathbbm{E} [w \,|\, \mathcal{F}_{T^j}] \mathbbm{E} [w^{- 1} \,|\,
   \mathcal{F}_{T^j}] \|_{\infty} \leqslant Q_2 (w). \]
For each fixed $j$ we have that the non-negative random variable $$| X
|_{T^j, u} | Z |_{T^j, w} \chi_{E_j} \in \mathcal{F}_{T^j}$$ and as such it can
be approximated from below by step functions.
\[ \sum_k \alpha_k^j \chi_{A_k^j} \nearrow | X |_{T^j, u} | Z |_{T^j, w}
   \chi_{E_j} ,\]
with $A^j_k \in \mathcal{F}_{T^j}$ disjoint and $\stackrel{\cdot}{\cup}_k A^j_k = E_j$. Notice
that on $A^j_k$ there holds
\begin{equation}\label{estimate_maximal}
 \alpha^j_k \chi_{A^j_k}(\omega)\leqslant X_{u}^{\ast} Z_{ w}^{\ast} (\omega),
 \end{equation}
where the maximal functions are taken with respect to 
 weighted measure:
\begin{equation*} %\label{notation_weightedmax} 
X_{ u}^{\ast}(\omega)=\sup_{t}| X_{t,u}(\omega)|.
\end{equation*}

\

Now recall that $\mathbbm{P} (A_k^j \cap E_{j + 1}) \leqslant \frac{1}{2}
\mathbbm{P} (A_k^j)$ and so if we write  $$S^{j+1}_k = A_k^j \backslash (A_k^j \cap E_{j + 1}),$$ then
$\mathbbm{P} (S_k^{j+1} ) \geqslant \frac{1}{2} \mathbbm{P} (A_k^j)$. 
We changed the index on $S$ to recall the important fact that it is measurable in $\mathcal{F}_{T^{j+1}}$. 
Notice the crucial property of the collection $\{S_k^{j+1}\}_{j,k\geqslant 0}$ : it is a disjoint collection in both parameters.

\

We estimate 
\begin{eqnarray*}
  &&\mathbbm{E} \left[ \sum^J_{j = 0} \sum_k \alpha_k^j \chi_{A_k^j} \right] 
  =  
  \sum^J_{j = 0} \sum_k \alpha_k^j \mathbbm{E} \left[\chi_{A^j_k}\right]
   =  \sum^J_{j = 0} \sum_k \alpha_k^j \mathbbm{P} \left(A^j_k\right)\\
  & \leqslant & 
  2 \sum^J_{j = 0} \sum_k \alpha^j_k \mathbbm{P} \left(S^{j+1}_k\right)
   =  2\,\mathbbm{E} \left[ \sum^J_{j = 0} \sum_k \alpha_k^j \chi_{S^{j+1}_k} \right]\\
  & = & 
  2\,\mathbbm{E} \left[ \sum^J_{j = 0} \sum_k \alpha_k^j\chi_{S^{j+1}_k} w^{\frac12} u^{\frac12} \right]
  \leqslant  
  \mathbbm{E} \left[ \sum^J_{j = 0} \sum_k X^{\ast}_{ u} u^{\frac12} Z^{\ast}_{ w} w^{\frac12} \chi_{S^{j+1}_k}  \right]\\
   &\leqslant&  
   2 \left( \mathbbm{E} \left[ \sum^J_{j = 0} \sum_k (X^{\ast}_{ u})^2 u \chi_{S^{j+1}_k} \right] \right)^{\frac12} 
    \left(\mathbbm{E} \left[ \sum^J_{j = 0} \sum_k (Z^{\ast}_{ w})^2 w \chi_{S^{j+1}_k} \right] \right)^{\frac12}\\[.5em] 
  & \leqslant & 
  2 \left(\mathbbm{E} \left[\left(X^{\ast}_u\right)^2 u\right]\right)^{\frac12} \left(\mathbbm{E}
  \left[\left(Z_w^{\ast}\right)^2 w\right]\right)^{\frac12}\\[.5em] 
  & = & 
  2 \left(\mathbbm{E} [u]\right)^{\frac12} \left(\mathbbm{E} [w]\right)^{\frac12} \left(\mathbbm{E}_u
  \left[\left(X^{\ast}_u\right)^2\right]\right)^{\frac12} \left(\mathbbm{E}_w \left[\left(Z^{\ast}_w\right)^2\right]\right)^{\frac12}\\[.5em] 
  & \leqslant & 
  8 \,(\mathbbm{E} [w])^{\frac12} (\mathbbm{E} [u])^{\frac12}
  \left(\mathbbm{E}_u \left[| X |^2\right]\right)^{\frac12} \left(\mathbbm{E}_w \left[| Z |^2\right]\right)^{\frac12}.
\end{eqnarray*}
By the monotone convergence theorem, this gives us the estimate
\[ \mathbbm{E} \left[ \sum_j | X |_{T^j, u} | Z |_{T^j, w} \chi_{E_j} \right]
   \leqslant 8\,\mathbbm{E} [w]^{\frac12} \mathbbm{E} [u]^{\frac12} \mathbbm{E}_u \left[|
   X |^2\right]^{\frac12} \mathbbm{E}_w \left[| Z |^2\right]^{\frac12}, \]
   and  we have thus seen that inequality (\ref{sparse_weighted_estimate}) holds with $C_{2}=8$.

   \
   
We now point out the changes for the case $p \neq 2$. We use the extrapolation theorem below from \cite{DPW}. 
Notice that only the weights are required to have the martingale property while $X,Y$ are functions.

\begin{theorem}[Domelevo-Petermichl]\label{thmextrap}
  Given a filtered probability space as described above. Let $1 < p < \infty$
  and $w \in A_p$. Let  $X, Y \in L^p(w)$. Suppose
  $1 < r < \infty$ and suppose $\forall A \geqslant 1 \; \exists N_r (A) > 0$
  increasing such that for triples $X, Y, \rho$ with $X,Y \in L^r(\rho)$ and
  $Q_r (\rho) \leqslant A$ there holds
  \begin{eqnarray*}
    \| Y \|_{L^r(\rho)} \leqslant N_r (A) \| X \|_{L^r(\rho)} .
  \end{eqnarray*}
  Then for any $1 < p < \infty$ there exists $N_p (B) > 0$ such that if
  $Q_p (w)  \leqslant B$ there holds
  \begin{eqnarray*}
    \| Y \|_{L^p(w)} \leqslant N_p (B) \| X \|_{L^p(w)} .
  \end{eqnarray*}
  Let $c_{\ref{Theorem_XstarX},p}$ denote the numeric part of the estimate in the
  weighted $L^p$ maximal estimate  from Theorem \ref{Theorem_XstarX}. We have 
   $$N_p (B) \leqslant 2^{\frac1{r}} N_r \left(2 c_{\ref{Theorem_XstarX},p'}^{\frac{p - r}{p - 1}} B\right) \text{ if } p > r .$$ 
  $$N_p (B) \leqslant 2^{\frac{r - 1}{r}} N_r \left( 2^{r - 1} \left(c_{\ref{Theorem_XstarX},p}^{p - r} B\right)^{\frac{r - 1}{p - 1}} \right) \text{ if } p < r. $$
\end{theorem}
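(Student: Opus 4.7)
The plan is to run a Rubio de Francia extrapolation adapted to the filtered probability space, where the role of the boundedness of the Hardy--Littlewood maximal operator is played by Theorem \ref{Theorem_XstarX}. The two cases $p>r$ and $p<r$ are handled by mirror-image arguments: one dualizes on the $Y$-side in the first case and on the $X$-side in the second.

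For $p>r$, I would begin with $L^{p/r}(w)$-duality at the level of the $r$-th power,
$$\|Y\|_{L^p(w)}^r = \sup\bigl\{\mathbb{E}[|Y|^r g\,w]\,:\, g\geqslant 0,\ \|g\|_{L^{(p/r)'}(w)}\leqslant 1\bigr\}.$$
For each admissible $g$, form the Rubio de Francia iterate $\mathcal{R}g := \sum_{k\geqslant 0}(M_w)^k g/(2c)^k$, where $M_w h := \sup_t \mathbb{E}_w[h\,|\,\mathcal{F}_t]$ is the weighted Doob maximal operator and $c$ denotes its $L^{(p/r)'}(w)$ operator norm. The quantitative bound $c \leqslant c_{\ref{Theorem_XstarX},p'}^{(p-r)/(p-1)}$ is obtained by reading Theorem \ref{Theorem_XstarX} at the dual exponent $p'$ after the change of measure to $w\,d\mathbb{P}$. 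The three usual properties $g\leqslant \mathcal{R}g$, $\|\mathcal{R}g\|_{L^{(p/r)'}(w)}\leqslant 2\|g\|_{L^{(p/r)'}(w)}$ and $M_w(\mathcal{R}g)\leqslant 2c\,\mathcal{R}g$ then follow by geometric summation.

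Setting $\rho := \mathcal{R}g\cdot w$, I would then verify $\rho\in A_r$ with
$$Q_r(\rho) \leqslant 2\, c_{\ref{Theorem_XstarX},p'}^{(p-r)/(p-1)}\, Q_p(w) =: A$$
by unwrapping the stopping-time characterisation of the $A_r$-characteristic and using the martingale $A_1$-type property $M_w(\mathcal{R}g)\leqslant 2c\,\mathcal{R}g$ in tandem with $w\in A_p$. With this $\rho$ at hand, the hypothesis gives $\|Y\|_{L^r(\rho)}\leqslant N_r(A)\,\|X\|_{L^r(\rho)}$; Hölder's inequality together with the norm control on $\mathcal{R}g$ yields
$$\|X\|_{L^r(\rho)}^r = \mathbb{E}[|X|^r\,\mathcal{R}g\cdot w] \leqslant \|X\|_{L^p(w)}^r\,\|\mathcal{R}g\|_{L^{(p/r)'}(w)} \leqslant 2\|X\|_{L^p(w)}^r.$$
Combining everything and taking the supremum over $g$ gives $N_p(B)\leqslant 2^{1/r}N_r(A)$, as asserted. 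For $p<r$, I would run the symmetric construction on the $X$-side: the algorithm is now applied to an admissible function in $L^{(r/p)'}(w^{1-r})$ and one sets $\rho := (\mathcal{R}h)^{1-r}\cdot w$, yielding $Q_r(\rho)\leqslant 2^{r-1}(c_{\ref{Theorem_XstarX},p}^{p-r}B)^{(r-1)/(p-1)}$; the rest proceeds as above.

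The main obstacle is the quantitatively sharp control of $Q_r(\rho)$: the precise power of $c_{\ref{Theorem_XstarX},\cdot}$ that appears must be tracked through the conversion of the martingale $A_1$-property of $\mathcal{R}g$ into the $A_r$-property of $\rho=\mathcal{R}g\cdot w$. In the non-doubling martingale setting this factorisation step is carried out by a direct stopping-time computation from the definition of $Q_r$, rather than through a covering lemma in the $\mathbb{R}^n$-style proof. Once this bookkeeping is done, everything else reduces to Hölder's inequality, the geometric-series bound for $\mathcal{R}$, and a single invocation of the hypothesis at exponent $r$.
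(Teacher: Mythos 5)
Note first that this theorem is not proved in the present paper; it is imported verbatim from \cite{DPW}, so there is no internal proof here to compare against. Your high-level plan --- a Rubio de Francia iteration adapted to the filtration, with Theorem \ref{Theorem_XstarX} playing the role of Buckley's maximal-function bound, plus $L^{p/r}$-duality and a mirror argument for $p<r$ --- is the right skeleton and is indeed what such an extrapolation proof must look like.

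However, the two quantitative steps you outline do not work as stated. First, you take the Rubio de Francia iterate with the weighted Doob operator $M_w h=\sup_t \mathbb{E}_w[h\,|\,\mathcal{F}_t]$ and claim its $L^{(p/r)'}(w)$-norm is controlled by Theorem \ref{Theorem_XstarX}. But $M_w$ on $L^q(w)$ (measure $w\,d\mathbb{P}$) is just Doob's maximal inequality, with operator norm $q/(q-1)$, completely independent of $Q_p(w)$; Theorem \ref{Theorem_XstarX} concerns the \emph{unweighted} maximal operator $X^\ast=\sup_t|X_t|$ acting on a weighted $L^q(v)$ and is not a statement about $M_w$. The operator that should appear in a sharp extrapolation is a ``twisted'' maximal $M'h := M(hw)/w$ (possibly further modified by a power), whose $L^q(w)$-norm equals $\|M\|_{L^q(w^{1-q})}$ and is then the place where Theorem \ref{Theorem_XstarX} enters.

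Second, and more seriously, your key bound $Q_r(\rho)\leqslant 2c\,Q_p(w)$ for $\rho=\mathcal{R}g\cdot w$ is false. Unwrapping the definition of $Q_r$ using only the weighted $A_1$ property $\mathbb{E}_w[\mathcal{R}g\,|\,\mathcal{F}_\tau]\leqslant 2c\,\mathcal{R}g$ gives, with $\rho_\tau:=\mathbb{E}[\rho\,|\,\mathcal{F}_\tau]$,
\[
\rho_\tau\leqslant 2c\,\mathcal{R}g\cdot w_\tau ,\qquad
(\mathcal{R}g)^{-\frac1{r-1}}\leqslant \Bigl(\tfrac{2c\,w_\tau}{\rho_\tau}\Bigr)^{\frac1{r-1}},
\]
hence
\[
\rho_\tau\,\mathbb{E}\bigl[\rho^{-\frac1{r-1}}\,\big|\,\mathcal{F}_\tau\bigr]^{r-1}\leqslant 2c\;w_\tau\,\mathbb{E}\bigl[w^{-\frac1{r-1}}\,\big|\,\mathcal{F}_\tau\bigr]^{r-1}\leqslant 2c\,Q_r(w),
\]
so you obtain $Q_r(\rho)\leqslant 2c\,Q_r(w)$ rather than $2c\,Q_p(w)$. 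For $p>r$ the class $A_r$ is strictly smaller than $A_p$, so $Q_r(w)$ may well be infinite even when $Q_p(w)\leqslant B$, and the argument collapses. The same defect appears in your $p<r$ variant: setting $\rho=(\mathcal{R}h)^{1-r}w$ still produces the $A_r$-characteristic of $w$ on the right. The $A_p$ structure of $w$ must be built into the \emph{construction} of $\rho$ (through the twisted operator and a careful power trick, and in the martingale setting with the extra caveat that conditional expectations do not commute with powers), not invoked ``in tandem'' after the fact; otherwise the exponent $\max\{1,(r-1)/(p-1)\}$ in the statement cannot be produced.
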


Using this theorem for $r=2$ we extrapolate from inequality (\ref{sparse_weighted_estimate}) 
\[ \| \mathcal{S} ( {X} ) \|_{L^2 (w)} \leqslant 8\, Q_2 (w) \| {X} \|_{L^2 (w)} , \]
so $N_2(A)=8A$. We obtain  the estimate claimed in Theorem \ref{Theorem_SX}
$$\| \mathcal{S} ( X ) \|_{L^p (w)} \leqslant C_{p} Q_p (w)^{\max \{1, \frac1{p-1}\}} \| X \|_{L^p (w)}.$$ 
Finally 
$$\| Y^{\ast} \|_{L^p (w)} \leqslant 8\| \mathcal{S} ( X ) \|_{L^p (w)}$$ 
gives the claimed estimate in Theorem \ref{Theorem_YstarX}.

\section{The stochastic process $Z$}
In this section, we prove Lemma \ref{L: weak type}, Theorem \ref{T: sparse decomposition} as well as Theorems \ref{Z est} \& \ref{Z w est}. Recall that the different processes involved obey hypothesis \ref{H: hypotheses} and without loss of generality, we assume that the nonnegative process $\widetilde{X}$ has non-zero closure.

\subsection{Weak type estimate relevant to $Z$} \label{SS: weak type Z}

In this section we will prove the weak type estimate, Lemma \ref{L: weak type}.

\begin{proof}[Proof of Lemma \ref{L: weak type}]
This proof is modelled after the exposition in   Ba\~nuelos-Osekowski \cite{BO}. We extend it to tackle the presence of jumps as well as the use of the nonnegative submartingale $\widetilde{X}$ instead of the martingale $X$. We aim to show \eqref{wte}:
\begin{equation*} 
\mathbb{P}\left( (|Z_t|+|\widetilde{X}_t|)^*  \geqslant \lambda \right) \leqslant 2 \lambda ^{-1} \|\widetilde{X}\|_1.
\end{equation*}
It suffices to show the inequality for $\lambda =1$. Indeed, we can replace $\widetilde{X}_t$ and $Z_t$ in the above inequality with $\lambda^{-1} \widetilde{X}_t$ and $\lambda^{-1}Z_t$ to get \eqref{wte}.

To  show the inequality for $\lambda =1$, define functions $V,U:\mathbb{R}\times \mathbb{R}^n \to \mathbb{R}$ by 
$$
V(x,y) = \left\{
    \begin{array}{ll}
        -2|x| & \mbox{when } |x|+|y|<1, \\
        1-2|x| & \mbox{when } |x|+|y| \geqslant1,
    \end{array}
\right.
$$
and
$$
U(x,y) = \left\{
    \begin{array}{ll}
        |y|^2-|x|^2 & \mbox{when } |x|+|y|<1, \\
        1-2|x| & \mbox{when } |x|+|y| \geqslant1.
    \end{array}
\right.
$$

Let us first observe that everywhere $V\leqslant U$. When $|x|+|y|\geqslant 1$ this is obvious. In the case $|x|+|y|< 1$ we see that $V\leqslant U \Leftrightarrow -2|x| \leqslant |y|^2-|x|^2 \Leftrightarrow (|x|-1)^2\leqslant |y|^2+1$ but with $0\leqslant |x|<1$ and $|y|\geqslant 0$ we have $(|x|-1)^2\leqslant 1$ and $|y|^2+1\geqslant 1$. 

Define the stopping time 
\[T=\inf \{t \geqslant0: |\widetilde{X}_t|+|Z_t| \geqslant1\}\]
so that $|\widetilde{X}_T|+|Z_T| \geqslant1$ and $ |\widetilde{X}_t|+|Z_t| < 1$ for $t<T$. We will prove that
\begin{equation}\label{goal}\E U(\widetilde{X}_T,Z_T) \leqslant 0.\end{equation}
Here is how we deduce the weak-type estimate \eqref{wte} from (\ref{goal}). We have $V\leqslant U$ everywhere and $\E U(\widetilde{X}_T,Z_T)\leqslant 0$. Therefore
\begin{align*}
0&\geqslant\E U(\widetilde{X}_T,Z_T) \\
&\geqslant\E V(\widetilde{X}_T,Z_T) \\
&=\E (V(\widetilde{X}_T,Z_T)\chi_{\{|\widetilde{X}_T|+|Z_T|\geqslant1\}})+\E (V(\widetilde{X}_T,Z_T)\chi_{\{|\widetilde{X}_T|+|Z_T|< 1\}}) \\
&=\E ((1-2|\widetilde{X}_T|)\chi_{\{|\widetilde{X}_T|+|Z_T|\geqslant1\}})+\E ((-2|\widetilde{X}_T|)\chi_{\{|\widetilde{X}_T|+|Z_T|<1\}}) \\
&=\mathbb{P}(|\widetilde{X}_T|+|Z_T|\geqslant 1)-2\E|\widetilde{X}_T|,
\end{align*}
from which we deduce 
\[\mathbb{P}((|\widetilde{X}_t|+|Z_t|)^*\geqslant1) \leqslant 2\|\widetilde{X}\|_1,\]
which is exactly \eqref{wte} for $\lambda=1$.

In order to prove (\ref{goal}), we will split as follows
\[\E U(\widetilde{X}_T,Z_T)=\E(U(\widetilde{X}_T,Z_T)\chi_{\{T=0\}})+\E(U(\widetilde{X}_T,Z_T)\chi_{\{T>0\}})\]
and show that both of these contributions are non-positive.

\paragraph{Case $T=0$.}
First we handle the case when $T=0$. For such $\omega$ where $T=0$ by definition of $T$ we have $|\widetilde{X}_0|+|Z_0| \geqslant1$ and $U(\widetilde{X}_0,Z_0)=1-2|\widetilde{X}_0|$ and so since $|Z_0| \leqslant |\widetilde{X}_0|$, we have $1\leqslant |\widetilde{X}_0|+|Z_0|\leqslant 2 |\widetilde{X}_0|$ so that $1-2|\widetilde{X}_0| \leqslant 0$. For $|\widetilde{X}_0|+|Z_0| \geqslant1$ we have thus $U(\widetilde{X}_0,Z_0)=1-2|\widetilde{X}_0| \leqslant 0$ and so
\[\E (U(\widetilde{X}_T,Z_T)\chi_{\{T=0\}})=\E (U(\widetilde{X}_0,Z_0)\chi_{\{T=0\}}) \leqslant 0.\]

\paragraph{Case $T>0$.}
Now we take care of the $\omega$ where $T>0$. By simple calculations on the derivatives of $U$, we check that for $|x|+|y| < 1$ we have
\begin{align}
\partial_{x} U(x,y)&= -2x, \label{signU1}\\
\partial_{y_i} U(x,y)&= 2y_i, \label{signU2}\\
\partial^2_{xx} U(x,y)&= -2  \label{signU3},\\
\partial^2_{xy_i} U(x,y)&= 0, \label{signU4}\\
\partial^2_{y_iy_j} U(x,y)&= 2\delta_{ij} \label{signU5},
\end{align}
where $\delta_{ij}$ is the Kronecker delta symbol.
On $\{T >0\}$, the process $(\widetilde{X}_t,Z_t)$ evolves in the set $\{(x,y): \ |x|+|y|<1\}$, in the interior of which the function $U$ is twice differentiable, which means that we have the following It\^o formula
\begin{equation}
U(\widetilde{X}_T,Z_T)=I_0 + I_1+  I_2 + I_3,
\label{ito}
\end{equation}
with
\begin{align*}
I_0 & = U(\widetilde{X}_0,Z_0) \\
I_1 &= \int_{0_+}^T \partial_x U(\widetilde{X}_s,Z_s)\mathd \widetilde{X}_s + \sum_{i}\int_{0_+}^T\partial_{y_i} U(\widetilde{X}_s,Z_s)\mathd Z^i_s \\
I_2 &=\frac{1}{2} \int_{0_+}^T \partial^2_{xx} U(\widetilde{X}_s,Z_s) \mathd [ \widetilde{X},\widetilde{X} ]_s +  \sum_{i}\int_{0_+}^T\partial^2_{xy_i} U(\widetilde{X}_s,Z_s) \mathd [ \widetilde{X}, Z^i ]_s \\
& \qquad\quad + \frac{1}{2} \sum_{i}\sum_{j}\int_{0_+}^T \partial^2_{y_iy_j} U(\widetilde{X}_s,Z_s) \mathd [ Z^i,Z^j ] _s, \\
 I_3 = & \sum_{0<s\leqslant T}    \Big(  U(\widetilde{X}_s,Z_s)-U(\widetilde{X}_{s_-},Z_{s_-}) 
 		- \partial_x U(\widetilde{X}_{s_-},Z_{s_-})\Delta X_s \\
& \qquad\quad - \sum_{i}\partial_{y_i} U(\widetilde{X}_{s_-},Z_{s_-})\Delta Y^i_s \\
& \qquad\quad - \frac{1}{2} \partial^2_{xx} U(\widetilde{X}_{s_-},Z_{s_-})   | \Delta\widetilde{X}_s |^2
 - \sum_{i}\partial^2_{xy_i} U(\widetilde{X}_{s_-},Z_{s_-}) \Delta\widetilde{X}_s \Delta Z^i_s \\
& \qquad\quad
		-  \frac{1}{2} \sum_{i}\sum_{j} \partial^2_{y_i y_j} U(\widetilde{X}_{s_-},Z_{s_-}) \Delta Z^i_s \Delta Z^j_s
		  \Big).
\end{align*}
where the last sum is over jumping times.

\paragraph{The term $I_0$.}
Finally, since $T>0$ we have $U(\widetilde{X}_0,Z_0)=|Z_0|^2-|\widetilde{X}_0|^2 \leqslant 0$ and taking expectation in \eqref{ito} implies
\[\E (U(\widetilde{X}_T,Z_T)\chi_{\{T>0\}}) \leqslant 0.\]

\paragraph{The term $I_1$.}
First we study $I_1$. Recall that $Z_t$ satisfies the following stochastic differential equation
\begin{equation}\label{dZ}
\mathd Z_t=(V_{t_-}-a\Id)Z_{t_-} \mathd t+\mathd Y_t
\end{equation}
and that $\widetilde{X}_t=X_t+A_t$ which implies 
\[\mathd \widetilde{X}_t=\mathd X_t+\mathd A_t,\] 
where $(X_t)_{t\geqslant 0}$ is a martingale, and $(A_t)_{t\geqslant 0}$ is an increasing FV process. Now if we replace $\mathd \widetilde{X}_s$ and $\mathd Z_s$ in the expression of $I_1$ by the above formulas, we will obtain a local martingale part which is 
\[\int_{0_+}^T \partial_x U(\widetilde{X}_s,Z_s)\mathd X_s + \int_{0_+}^T\langle \partial_y U(\widetilde{X}_s,Z_s), \mathd Y_s\rangle\]
and a process 

\[ N_T =\int_{0_+}^T \partial_x U(\widetilde{X}_s,Z_s)\mathd A_s +\int_{0_+}^T\langle \partial_y U(\widetilde{X}_s,Z_s), (V_s-a\Id)Z_s\rangle \mathd s. \]

We may assume that the local martingale is a true martingale without loss of generality and hence its expectation is null. As for the process $N_T$, by \eqref{signU1} and \eqref{signU2} we have 
\[N_T=-2\int_{0_+}^T \widetilde{X}_s\mathd A_s+2\int_{0_+}^T \langle Z_s,(V_s-a\Id)Z_s\rangle \mathd s.\]

The non-positivity of the first term follows from $\mathd A_s \geqslant0$, since $(A_t)$ is an increasing FV process, and the non-negativity of $(\widetilde{X}_t)_{t\geqslant 0}$. The second term is non-positive because $V-a\Id$ takes values in the class of non-positive matrices, so we have $\langle Z_s,(V_s-a\Id)Z_s\rangle \leqslant 0$.  In conclusion, $N_T\leqslant 0$ and taking the expectation of $I_1$ gives us

\[\E(I_1 \chi_{T>0}) \leqslant 0.\]

\paragraph{The term $I_2$.}
Now we deal with $I_2$. By \eqref{signU3}-\eqref{signU5}, we obtain that
\[ I_2 = ( \langle Z,Z \rangle_T -|Z_0|^2-  \langle \widetilde{X},\widetilde{X} \rangle_T + |\widetilde{X}_0|^2 ).\]
Since $Z_t$ satisfies \eqref{dZ}, by integrating we get
\[Z_t - Z_0 =\int_{0_+}^t(V_s-a\Id)Z_s\mathd s + Y_t - Y_0.\]
Taking the quadratic covariation on both sides we obtain 
\begin{align*} 
\langle Z,Z \rangle_t - |Z_0|^2 &= \langle Y,Y \rangle_t - |Y_0|^2, \ \ \ \forall t\geqslant0 \\
& \leqslant  \langle X,X \rangle_t - |X_0|^2 \ \text{ by differential subordination.}
\end{align*}
In particular
$$ \E (\langle Z,Z \rangle_t - |Z_0|^2)  \leqslant  \E (\langle X,X \rangle_t - |X_0|^2) $$
On the other hand, Since $\widetilde{X}_t=X_t+A_t$, where $(A_t)_{t\geqslant 0}$ is a predictable FV process we have for all $t\geqslant 0$ 
\begin{equation*}
\E \langle \widetilde{X},\widetilde{X} \rangle_t = \E [X,X]_t + 2 \E [X,A]_t + \E[A,A]_t =  \E [X,X]_t + \E[A,A]_t
\end{equation*}
and therefore, since $A_t$ is increasing,
\begin{align*} 
 \E (\langle \widetilde{X},\widetilde{X} \rangle_t - \E \langle \widetilde{X},\widetilde{X} \rangle_0)
& = \E ([X,X]_t - [X,X]_0) + \E ([A,A]_t - [A,A]_0)\\
& \geqslant \E ([X,X]_t - [X,X]_0). 
\end{align*}

It follows immediately that\[\E (I_2 \chi_{T>0} ) \leqslant 0.\]

\paragraph{The term $I_3$.}
For $I_3$, the additional properties of the function $U$ that are required are these: If $|x|+|y|<1$ then 
 \begin{equation}\label{jumpconvexity}
 U(x+h,y+k)-U(x,y)-\partial_x U(x,y)h -\sum_j \partial_{y_j}U(x,y)k_j\leqslant -h^2 + |k|^2.
 \end{equation}
This inequality is true irrespective of whether $|x+h|+|y+k|<1$ or $|x+h|+|y+k|\geqslant 1$. In the first case, which means the process is still inside the diamond after the jump, there is an equality -- a simple consequence of the expression of $U$ and the definition of quadratic covariation. It therefore turns out that the only possible contribution in the sum defining $I_3$ is at the stopping time $s=T$. This is the second case $|x+h|+|y+k|\geqslant 1$ meaning the jump moves the process outside of the diamond. Now, if $h$ and $k$ are the jump increments of $\widetilde{X}$ and $Z$ respectively and observing that $\Delta Z_s=\Delta Y_s$ at jump times, we have
\begin{equation}
	\E |\Delta Z_s|^2 = \E |\Delta Y_s|^2 \leqslant \E |\Delta X_s|^2 \leqslant \E |\Delta \widetilde{X}_s|^2
\end{equation}
where we used successively that $Y$ is differentially subordinate to $X$ and that $\E \Delta X_s \Delta A_s = 0$.
As a conclusion, we have again $ \E I_3 \leqslant 0$.

This concludes the proof of  Lemma \ref{L: weak type}.
\end{proof}

\subsection{Sparse domination of $Z$}

In this section, we will establish the sparse domination,  Theorem \ref{T: sparse decomposition}. Before we begin, let us make the following observation: 

\begin{remark}\label{nonincreasingW}
If a process $W_t$  solves the homogeneous equation,
$$\mathd W_t=(V_{t_-}-a\Id)W_{t_-}\mathd t,$$
then $|W_t|$ is non-increasing.  Indeed,
\[
\mathd \langle W_t, W_t\rangle =2\langle \mathd W_t,W_t\rangle =2\langle (V_t-a\Id)W_t ,W_t\rangle \mathd t .
\]
But since $V_t-a\Id$ takes values in the class of non-positive matrices, the above shows that $|W_t|^2=\langle W_t, W_t\rangle$ is non-increasing 
\end{remark}

To prove Theorem \ref{T: sparse decomposition} we will also need the following corollary which is an easy consequence of Lemma \ref{L: weak type}. .

\begin{corollary}\label{C:WTE}
Let $\widetilde{X}$ and $Z$ be as in Hypothesis \ref{H: hypotheses}.
Then for all $A \in \mathcal{F}_0$ and $\lambda >0$ we have
\begin{equation}
\label{wte2}
\mathbb{P}\left(\{\omega \in A: Z^*(\omega)\vee \widetilde{X}^*(\omega)  \geqslant  \lambda \E(\widetilde{X}|\F_0)(\omega)\} \right) \leqslant 2 \lambda ^{-1} \mathbb{P}(A).
\end{equation}
\end{corollary}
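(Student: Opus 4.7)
The plan is to reduce Corollary \ref{C:WTE} to Lemma \ref{L: weak type} by an $\mathcal{F}_0$-measurable normalization, exactly in the spirit in which Lemma \ref{Theorem_WangWeakType} was derived from Wang's theorem earlier in the paper. Set $c_0 := \E(\widetilde{X}|\mathcal{F}_0)$, which is non-negative and $\mathcal{F}_0$-measurable. I would first dispose of the degenerate case $\{c_0 = 0\}$. Since $\widetilde{X}\geqslant 0$, the vanishing of its conditional expectation forces $\widetilde{X}_t = 0$ for all $t$ almost surely on this set, whence $X_t = A_t = 0$; differential subordination then gives $[Y,Y]_t=0$ so $Y_t=0$; and the SDE for $Z$ reduces to the homogeneous equation $\mathd Z_t = (V_{t_-}-a\Id)Z_{t_-}\mathd t$ with $|Z_0|\leqslant |\widetilde{X}_0|=0$, which together with Remark \ref{nonincreasingW} forces $Z_t = 0$. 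So $\{c_0 = 0\}\cap A$ contributes trivially to the probability on the left-hand side of \eqref{wte2}.

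On the complementary set $A' := A \cap \{c_0 > 0\}$ I would introduce the rescaled processes
$$\widetilde{X}'_t := \chi_{A'}\widetilde{X}_t/c_0, \quad X'_t := \chi_{A'}X_t/c_0, \quad A'_t := \chi_{A'}A_t/c_0,$$
$$Y'_t := \chi_{A'}Y_t/c_0, \quad Z'_t := \chi_{A'}Z_t/c_0.$$
Since the multiplier $\chi_{A'}/c_0$ is non-negative and $\mathcal{F}_0$-measurable, the primed processes inherit Hypothesis \ref{H: hypotheses}: $X'$ remains a uniformly integrable martingale, $A'$ is predictable and increasing, $Y'$ is differentially subordinate to $X'$ (the square brackets scale by the square of the deterministic factor), $Z'$ satisfies the same SDE with the same $V_t$ and $a$, and $|Z'_0|\leqslant |\widetilde{X}'_0|$. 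Verifying this compatibility is the only real (but routine) technical step.

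Applying Lemma \ref{L: weak type} to the primed processes gives
$$\mathbb{P}\bigl((|Z'|+|\widetilde{X}'|)^*\geqslant \lambda\bigr) \leqslant 2\lambda^{-1}\|\widetilde{X}'\|_1.$$
The $L^1$ norm collapses via the tower property:
$$\|\widetilde{X}'\|_1 = \E\bigl[\chi_{A'}\widetilde{X}/c_0\bigr] = \E\bigl[\chi_{A'}c_0^{-1}\E(\widetilde{X}|\mathcal{F}_0)\bigr] = \mathbb{P}(A') \leqslant \mathbb{P}(A).$$
On $A'$ we have $(|Z'|+|\widetilde{X}'|)^* = (|Z|+|\widetilde{X}|)^*/c_0 \geqslant (Z^* \vee \widetilde{X}^*)/c_0$, so the event $\{\omega \in A': Z^* \vee \widetilde{X}^* \geqslant \lambda c_0\}$ is contained in $\{(|Z'|+|\widetilde{X}'|)^* \geqslant \lambda\}$. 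Combining with the trivial contribution from $\{c_0 = 0\}$ then yields \eqref{wte2}.
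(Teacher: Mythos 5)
Your proposal is correct and follows essentially the same route as the paper: normalize by the $\mathcal{F}_0$-measurable quantity $\E(\widetilde{X}\,|\,\mathcal{F}_0)$, check that the primed processes still satisfy Hypothesis \ref{H: hypotheses}, apply Lemma \ref{L: weak type}, and handle the set where the conditional expectation vanishes by observing (via non-negativity, differential subordination, and Remark \ref{nonincreasingW}) that all processes vanish there. The only cosmetic difference is that you make the $\|\widetilde{X}'\|_1 = \mathbb{P}(A')$ computation and the event inclusion fully explicit, which the paper leaves to the reader.
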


\begin{proof}[Proof of Corollary \ref{C:WTE}]
The corollary follows easily from the weak type estimate in Lemma \ref{L: weak type} if $\mathbb{P}\{\E(\widetilde{X}|\F_0)=0\}=0$ by dividing all processes by $\E(\widetilde{X}|\F_0)$. If $\mathbb{P}\{\E(\widetilde{X}|\F_0)=0\}>0$, let us write
$$X'=\chi_{A\cap \{ \E(\widetilde{X}|\F_0) >0\}}\frac{\widetilde{X}}{\E(\widetilde{X}|\F_0)},$$
$$ Y'=\chi_{A\cap \{ \E(\widetilde{X}|\F_0) >0\}}\frac{Y}{\E(\widetilde{X}|\F_0)}, $$
$$ Z'=\chi_{A\cap \{ \E(\widetilde{X}|\F_0) >0\}}\frac{Z}{\E(\widetilde{X}|\F_0)}.$$
Notice first that $\E(\widetilde{X}|\F_0)$ is of course measurable in $\F_0$, so the assumptions of the weak type estimate in Lemma \ref{L: weak type} are satisfied. We can now estimate 
\begin{equation}\label{prewt}
\mathbb{P}\left(\{\omega : Z'^*(\omega)\vee \widetilde{X}'^*(\omega)  \geqslant  \lambda \} \right) \leqslant 2 \lambda ^{-1} \| \widetilde{X}'\|_1.
\end{equation}
Further notice that, since by assumption $\widetilde{X}\geqslant 0$ and $X\geqslant 0$, we have $\E(\widetilde{X}|\F_0)(\omega) =0 \Rightarrow \widetilde{X}_0(\omega)=0 \Rightarrow X_0(\omega)=0$, hence $X_t(\omega)=0$ for all $t>0$.  In particular, all future increments of $X(\omega)$ are zero and thus also $Y_t(\omega) = 0$ for all $t > 0$. Thanks to Remark \ref{nonincreasingW} we see that $Z_t(\omega)$ also remains $0$.  This means, the part 
$\{\E(\widetilde{X}|\F_0)=0\}$ takes no effect on the estimate we seek to prove, which allows us to deduce the claim from estimate \eqref{prewt}.
\end{proof}

\begin{proof}[Proof of Theorem \ref{T: sparse decomposition}]
Now that we have an appropriate distributional inequality \eqref{wte2}, we are ready to establish a sparse domination.
We consider for the moment continuous path processes. To prove \eqref{sparse dec}, we will prove that for all $t\geqslant 0$ we have
\begin{equation}
|Z_t| \leqslant 4 S(\widetilde{X}).
\label{est_t}
\end{equation}

To do so, we will successively construct a sequence of filtrations and an increasing sequence of stopping times $(T^j)_{j=0}^\infty$ to determine the sparse domination. 

The key of the proof, besides the weak type estimate established in Lemma \ref{L: weak type} and Corollary \ref{C:WTE}, relies on recursivity in order to construct the stopping times. The construction of the sparse domination differs from the one in the case of $Y$ because the operator we want to estimate is a perturbation of $Y$ (which is differentially subordinate to $X$).
Indeed, the iteration procedure produces certain difficulties concerning the foot of the iterated processes. The difficulty arises because $Z$ is a process with infinite memory and governed by a stochastic ODE taking into account the value of $Z$. We cannot just go ahead and set its foot back to 0 or to anything else, that will allow us to use the weak type estimate in Lemma \ref{L: weak type}. Instead, we build an array of processes that we dominate in a different way, making use of the homogeneous part of the ODE via Remark \ref{nonincreasingW}.

Now, we begin with the domination. First let $\mathfrak{F}^0=\mathfrak{F}$, i.e.\@ $\F^0_t=\F_t$ and  let 
$Y_t^0=Y_t$, $\widetilde{X}_t^0=\widetilde{X}_t$, $Z_t^0=Z_t$. Also we will put $T^0=0$ and hence $E_0=\{T^0<\infty\}=\Omega$.  

Define the set
\[E_1=\{ \omega \in \Omega : Z^{0*}(\omega) \vee \widetilde{X}^{0*}(\omega)>4\E(\widetilde{X}|\F_0)(\omega)\}. \]
Obviously, $Y^0$, $\widetilde{X}^0$ and $Z^0$ satisfy the assumptions of Corollary \ref{C:WTE}, 

so we can apply it with $\lambda=4$ to estimate $\mathbb{P}(E_1)$ and more precisely if $A_0 \in \F_0$ then 
\begin{equation*}
\mathbb{P}(A_0\cap E_1) \leqslant \dfrac{2}{4} \mathbb{P}(A_0)=\dfrac{1}{2}\mathbb{P}(A_0).
\end{equation*}
We can also associate a stopping time
\[T^1(\omega)=\inf\{t>0:|Z_t^{0}(\omega)| \vee \widetilde{X}_t^{0}(\omega)>4\E(\widetilde{X}|\F_0)(\omega)\},\]
which is by definition almost surely finite in $E_1$. 
Now if $t\in [0,T^1)$ then by definition of $T^1$ we have
\[ |Z^0_t(\omega)|\chi_{[T^{0},T^{1})}(t)\chi_{E_0}(\omega) \leqslant 4 \E(\widetilde{X}|\mathcal{F}_0)(\omega)\chi_{E_0}(\omega). 
\]
Indeed, more is true. We define the process $\widetilde{Z}^0_t$ to obey $\widetilde{Z}^0_0=0$ and $$\mathd \widetilde{Z}^0_t=(V_t-a \Id)\widetilde{Z}^0_t \mathd t + \mathd Y_t \chi_{[0,T^1)}.$$
Thanks to Remark \ref{nonincreasingW}, we know that once $T^1$ is reached and $\mathd Y$ has been stopped, the ODE above becomes homogeneous with initial condition $Z_{T^1}$ and its solution does not increase in norm. For the process $\widetilde{Z}^0$ we have therefore an estimate not only for times $t < T^1$ but for all times $t\geqslant 0$, namely:
\[
|\widetilde{Z}^0_t(\omega)| \leqslant 4 \E(\widetilde{X}|\mathcal{F}_0)(\omega)\chi_{E_0}(\omega).
\]

When $T^1=\infty$ we are done, because on $E_0 \setminus E_1$ our sparse estimate is complete. For the next step, we work on $E_1=\{T^1<\infty\}$.
We define the filtration $\mathfrak{F}^1 = (\F^1_t)_{t \geqslant0} \assign (\F^0_{t\vee T^1})_{t\geqslant0}=(\F_{t\vee T^1})_{t\geqslant0}$ and consider the following adapted processes
\begin{align*}
&Y_t^{1}=\chi_{E_1}\Big(0 + \int_{T^1_+}^{t \vee T^1}\mathd Y_s\Big)=\chi_{E_1}(Y_{t \vee T^1}-Y_{T^1}), \\
&\widetilde{X}_t^{1}=\chi_{E_1}\Big(\widetilde{X}_{T^1}+\int_{T^1_+}^{t \vee T^1}\mathd \widetilde{X}_s\Big ), 
\end{align*}
with similar definitions for $X^1$ and $A^1$. Notice that $Y^1$ is differentially subordinate to $X^1$. 
Define $Z^1$ adapted to $\mathfrak{F}^1$ by its initial value $Z^1_0=0$ and satisfying for $t>0$ the stochastic differential equation
\begin{equation*}\mathd Z^{1}_t=(V_{t_-}-a\Id)Z^{1}_{t_-} \mathd t + \mathd Y^{1}_t\end{equation*}
and the set    
\[E_2=\{ \omega \in E_1 : Z^{1*}(\omega)\vee \widetilde{X}^{1*}(\omega)>4\E(\widetilde{X}|\F_{T^1})(\omega)\} \]
with its corresponding stopping time
\[T^2(\omega)=\inf\{t>T^1:|Z_t^1(\omega)|\vee \widetilde{X}_t^1(\omega)>4\E(\widetilde{X}|\F_{T^1})(\omega)\}.\]

Observe that the newly defined processes $X^1$, $Y^1$, $A^1$, $\widetilde{X}^1$ and $Z^1$ again satisfy the assumptions of Lemma \ref{L: weak type} and Corollary \ref{C:WTE} (but now with respect to the filtration $\mathfrak{F}^1$). It is important to notice that in particular $|Z^1_0|\leqslant |\widetilde{X}^1_0|$ so we can apply the weak type estimate \eqref{wte2} to obtain
\[\mathbb{P}(E_2) \leqslant \frac 12 \mathbb{P}(E_1).\]
Moreover, applying \eqref{wte2} to the set $A=A_1$, we get for every $A_1\subseteq E_1$, $A_1 \in \F_{T^1}$
\[\mathbb{P}(A_1 \cap E_2)=\mathbb{P}(\{\omega \in A_1:Z^{1*}(\omega)\vee \widetilde{X}^{1*}(\omega)>4\E(\widetilde{X}^1|\F_{T^1})(\omega) \}) \leqslant \frac{1}{2}\mathbb{P}(A_1).\]
Since $Z_t^1$ has support on $E_1$, by definition of $T^2$ we have for $t<T^2$
\[ |Z_t^{1}| \leqslant 4 \E(\widetilde{X}|\mathcal{F}_{T^1}) \chi_{E_1}.\]
Finally, we define the process $\widetilde{Z}^1_t$ on all of $\Omega$ via its initial value 0 as well as the ODE 
\begin{equation*}\mathd \widetilde{Z}^{1}_t=(V_{t_-}-a\Id)\widetilde{Z}^{1}_{t_-} \mathd t + \mathd Y^{1}_t\chi_{[T^1,T^2)}. \end{equation*}
Using Remark \ref{nonincreasingW} we have $\widetilde{Z}^1_t=0$ outside $E_1$. We also have $\widetilde{Z}^1_t=0$ for $t<T^1$ and non-increasing after $T^2$. Clearly $\widetilde{Z}^1_t$ is identical to $Z^1$ up until $T^2$. So we have 
$$ |\widetilde{Z}^1_t|\leqslant 4 \mathbb{E}(\widetilde{X}|\F_{T^{1}})\chi_{E_1}.$$
and the estimate holds for all $t$. 
If $T^2=\infty$ then we are done, because the sparse estimate is complete for $E_0 \setminus E_2$. Otherwise, for the next step, we work on $E_2=\{T^2<\infty\}$, defining processes with a superscript 2 as before. 

Repeating the procedure inductively delivers processes $\widetilde{Z}^n_t$ all with initial value 0 and satisfying the ODE 
$$ \mathd \widetilde{Z}^{n}_t=(V_{t_-}-a\Id)\widetilde{Z}^{n}_{t_-} \mathd t + \mathd Y^{n}_t\chi_{[T^n,T^{n+1})}.$$ These processes are defined on all of $\Omega$, and satisfy for all $t\geqslant 0$
$$|\widetilde{Z}^n_t| \leqslant 4 \mathbb{E}(\widetilde{X}|\F_{T^{n}}) \chi_{E_n}.$$ 

To finish, we sum the ODEs to obtain $\sum_{n=0}^{\infty} \widetilde{Z}_0^n=0$ and
$$ \mathd \Big( \sum_{n=0}^{\infty} \widetilde{Z}^n\Big)_t=(V_{t_-}-a \Id) \Big( \sum_{n=0}^{\infty} \widetilde{Z}^n\Big)_{t_-} \mathd t + \Big(\sum_{n=0}^{\infty} \chi_{[T^i,T^{i+1})}\Big) \mathd Y_t ,$$
where the last term is simply $\mathd Y_t$. 
The infinite sums are justified because $\mathbb{P}(E_n)\to 0$.
By uniqueness of the solution to the original initial value problem, we have $$Z_t= \sum_{n=0}^{\infty}  \widetilde{Z}_t^n.$$ We can now deduce the domination $$|Z_t| \leqslant 4\sum_{n=0}^{\infty}\mathbb{E}(\widetilde{X}|\F_{T^{n}}) \chi_{E_n}$$ as claimed.

\end{proof}

\subsubsection{Changes in the presence of jumps}
We already have a weak type estimate in the presence of jumps. The additional changes required to the domination itself are similar to what we have seen in the martingale case with jumps. We show that $|Z_t|\leqslant 8 S(\widetilde{X})$. It is important to notice that $Y$ has a jump $Y_T-Y_{T_-}$ at time $T$ if and only if $Z$ does with $Z_T-Z_{T_-}=Y_T-Y_{T_-}$. As before, there exists an operator $r_T$ of norm at most 1 so that $r_T(X_T-X_{T_-})=|X_T-X_{T_-}|$. Therefore, if a jump arises at the stopping time $T^j$, we may lose control of the size of $Z$. This is counter acted in writing $$Z_{T^j}=Z_{T^j_-}+(Z_{T^j}-Z_{T^j_-})=Z_{T^j_-}+(Y_{T^j}-Y_{T^j_-}).$$ 
Now, using differential subordination 

$$|Z_{T^j}|\leqslant |Z_{T^j_-}| +  |X_T-X_{T_-}|  =|Z_{T^j_-}| + r_{T^j}(X_{T^j}-X_{T^j_-}).$$

Before $T^j$, we have domination on $|Z_{T^j_-}|$ and $r_{T^j}X_{T^j_-}$.
The remaining term $r_{T^j}X_{T^j}$ is put with the new iterates $X^j,Y^j,Z^j$ as follows: 
\begin{align*}
& X^j_t=\chi_{E_j}\Big( X_{T^j}+\int_{T^j_+}^{t \vee T^j}\mathd X_s\Big),\\
& Y^j_t=\chi_{E_j}\Big( r_{T^j}X_{T^j}+\int_{T^j_+}^{t \vee T^j}\mathd Y_s\Big),\\
& Z^j_t=\chi_{E_j}\Big( r_{T^j}X_{T^j}+\int_{T^j_+}^{t \vee T^j}\mathd Z_s\Big).
\end{align*}
The remaining part of the argument remains the same.

\subsection{$L^p$ estimates}

In this section we provide $L^p$ estimates for the process $Z$.
\begin{proof}[Proof of Theorem \ref{Z est}]
This follows from the sparse domination in Theorem \ref{T: sparse decomposition} and the corresponding estimates for the sparse operator $S(\widetilde{X})$. For every $p>1$ there exists $C_p'>0$ such that
\begin{equation}
\|S(\widetilde{X})\|_{\textup{L}^p} \leqslant C_p' \|\widetilde{X}\|_{\textup{L}^p}.
\label{SO1}
\end{equation}
Moreover, assuming that the weight $w$ is a positive, uniformly integrable submartingale, the estimate
\begin{equation}
\|S(\widetilde{X})\|_{\textup{L}^2(w)} \leqslant \tilde{C}_2'Q_2^{\F}(w) \|\widetilde{X}\|_{\textup{L}^2(w)}
\label{SO2}
\end{equation}
also holds for some constant $\tilde{C}_2'>0$.
 
We omit the proof of \eqref{SO1} and \eqref{SO2} as they are almost the same as the proof of the estimate for the sparse operator in the martingale case.  Namely for \eqref{SO1} the proof without weight, and for \eqref{SO2} the proof with the weights $w_{T^j}$ sampled at stopping times $T^j$ replaced by the projections $\mathbb{E}(w|\F_{T^j})$. Observe that if $a=0$, then $w$ and $\widetilde{X}$ are actually martingales so we get exactly the same estimate as before.

Now \eqref{sparse dec}, \eqref{SO1} and \eqref{SO2} obviously imply \eqref{Zest_1} and \eqref{Zest_2} which proves the theorem.
\end{proof}

\section{Estimates for the Riesz vector}
The proof of Theorems \ref{unweightedRiesz} and \ref{Riesz} now follows standard arguments.

We start with the proof of \eqref{E1w} in Theorem  \ref{Riesz}. Since here $\Ric_\varphi \geqslant 0$ (i.e.\@ $a=0$), as we mentioned before, both $\widetilde{X}=X$ and the weight $w$ are martingales.  Hence we are able to use the martingale extrapolation theorem from \cite{DPW}. Using this theorem for $r=2$ we extrapolate from \eqref{SO2} and obtain weighted $L^p$ estimates for $S(\widetilde{X})$
\begin{equation*}
\|S(\widetilde{X})\|_{\textup{L}^p(w)} \leqslant \tilde{c}_p'Q_p^{\F}(w)^{\max\{1,\frac{1}{p-1}\}} \|\widetilde{X}\|_{\textup{L}^p(w)},
\end{equation*}
where $Q_p^{\mathcal{F}}(w)$ is the $A_p$ characteristic of the weight $w$ defined on the filtered space as
\[Q_p^{\mathcal{F}}(w)=\sup_\tau \big\|\E(w|\F_{\tau})\E(u|\F_{\tau})^{p-1}\big\|_{\infty}.\]
In the above expression the supremum runs over adapted stopping times $\tau$ and by $u$ we denote the dual weight satisfying $u^pw=u$.

Obviously this estimate along with the sparse domination \eqref{sparse dec} implies a weighted $\textup{L}^p$ estimate for $Z$
\begin{equation}
\|Z^*\|_{\textup{L}^p(w)} \leqslant \tilde{c}_pQ_p^{\F}(w)^{\max\{1,\frac{1}{p-1}\}} \|\widetilde{X}\|_{\textup{L}^p(w)},
\label{Zest_3}
\end{equation}
for some constant $\tilde{c}_p>0$.

Now by taking the probabilistic representation of the Riesz transform \eqref{rep2}, one can write
\begin{align}
\|\vec{R}_\varphi f\|^p_{\textup{L}^p(w)} &=\int_M |\vec{R}_\varphi f|^p w(x)d\mu_\varphi (x) \nonumber \\
&= 2^p\lim_{y\to\infty}\int_M \big|\mathbb{E}_y\left[Z_\tau|B^M_{\tau}=x \right]\big|^p w(x)d\mu_\varphi (x) \nonumber\\
&\leqslant 2^p\lim_{y\to\infty}\int_M \mathbb{E}_y\big(|Z_\tau|^p|B^M_{\tau}=x\big) w(x)d\mu_\varphi (x) \nonumber\\
&= 2^p\lim_{y\to\infty}\int_M \mathbb{E}_y\left[ |Z_\tau|^p w_\tau   |B^M_{\tau}=x \right]d\mu_\varphi(x). \label{riesz1}
\end{align}
Here in the third line we applied the Jensen's inequality to the conditional expectation, while in the fourth line we used that for every $\omega$ such that $B^M_\tau(\omega)=x$ we have $w_\tau(\omega)=w(x)$.

Since $d\mu_\varphi(x)$ is the invariant measure with respect to $B^M_t$, i.e.\@ $d\mu_\varphi(x)$ is counting how many trajectories arrive at $x$, we have
\[d\mu_\varphi(x)=d\mathbb{P}(B^M_\tau=x)\]
so we can write \eqref{riesz1} as
\begin{align*}
2^p\lim_{y\to\infty}\mathbb{E}\left( \mathbb{E}_y\left[ |Z_\tau|^p w_\tau   |B^M_{\tau}=x \right]\right)&= 2^p\lim_{y\to\infty}\mathbb{E}\big(|Z_\tau|^p w_\tau\big)\\
&= 2^p\lim_{y\to\infty}\int_\Omega |Z_\tau(\omega)|^p w_\tau(\omega) d\mathbb{P}(\omega)\\
&= 2^p\lim_{y\to\infty}\|Z_\tau\|^p_{\textup{L}^p(w)}.
\end{align*}
Now we use the estimate $\eqref{Zest_3}$ to get
\begin{align*}
2^p\lim_{y\to\infty}\|Z_\tau\|^p_{\textup{L}^p(w)} &\leqslant 2^p\tilde{c}_p^p\lim_{y\to\infty}Q_p^{\F(y)}(w)^{\max\{p,\frac{p}{p-1}\}}\|\widetilde{X}\|^p_{\textup{L}^p(w)} \\
&\leqslant 2^p\tilde{c}_p^p\lim_{y\to\infty}Q_p^{\F(y)}(w)^{\max\{p,\frac{p}{p-1}\}}\|Qf(B^M_{\tau},B_{\tau})\|_{\textup{L}^p(w)}^p  \\
& \leqslant \tilde{C}_p  \tilde{Q}_p(w)^{\max\{p,\frac{p}{p-1}\}} \|f\|_{\textup{L}^p(w)}^p,
\end{align*}
for some constant $\tilde{C}_p>0$ depending only on $p$. Combining all of the above we finally get
\[\|\vec{R}_\varphi f\|^p_{\textup{L}^p(w)}\leqslant  \tilde{C}_p  \tilde{Q}_p(w)^{\max\{p,\frac{p}{p-1}\}} \|f\|_{\textup{L}^p(w)}^p,\]
which by taking the $p$-th root gives us exactly \eqref{E1w}.

Notice that the sparse domination itself depends upon the used filtration (and hence $y$). Here the norm $\|\widetilde{X}\|_{\textup{L}^p(w)}$ is taken at $t=\infty$, which is $\tau$ in our stopped processes. We also use the fact that for $\omega$ such that $(B^M_{\tau}(\omega),B_{\tau}(\omega))=(x,0)$ we have
\[Qf(B^M_{\tau}(\omega),B_{\tau}(\omega))=Qf(x,0)=f(x)\]
and hence $\|Qf(B^M_{\tau},B_{\tau})\|_{\textup{L}^p(w)}=\|f\|_{\textup{L}^p(w)}$. Also $Q^{\mathcal{F}^{(y)}}_p(w)$ is the $A_p$ characteristic that corresponds to the filtration when $B_0=y$ and $\tilde{Q}_p(w)$ is the Poisson flow characteristic. When the weight $w$ is a martingale, those two characteristics are comparable (for more details see \cite{DPW}).

In the negative curvature case, i.e. $a>0$, we don't have the needed comparability anymore and that is why we are not able to prove the weighted estimate \eqref{E1w} using this method. By using the same method we could get some sort of weighted estimate, but the Poisson flow characteristic $\tilde{Q}_p(w)$ should be replaced with some different characteristic which would be comparable to the $A_p$ characteristic $Q^{\mathcal{F}^{(y)}}_p(w)$.

On the other hand, unweighted estimates from Theorem \ref{unweightedRiesz} hold even in the case when $a>0$ (negative curvature). Estimate \eqref{E1} is proved the same way as \eqref{E1w} just by omitting the weight $w$.

\section{Examples}
The setting we are working on is very general and covers a lot of classical examples. In this section we are only going to mention a few of them.
\begin{itemize}
\item[(1)] (Euclidean space) Taking $M=\R^n$ and $\varphi=0$, we get $\Ric_\varphi=0$ and $M_t=\textup{Id}$ for all $t\geqslant 0$. This way one can retrieve estimates for the classical Riesz transforms  $R_i = \frac{\partial}{\partial x_i} (-\Delta)^{-1/2}$ on $\R^n$.
\item[(2)] (Gauss space) Taking $M=\R^n$ and
$$\varphi(x)=\frac{\|x\|^2}{2}+\frac{n}{2}\log(2\pi),$$
we get $\mu_\varphi=\gamma_n$ (Gaussian measure) and $M_t=e^{-t}\textup{Id}$ for all $t\geqslant 0$. This gives us the estimates for the Riesz transform associated with the Ornstein-Uhlenbeck operator $\Delta_\varphi=\Delta-x\cdot\nabla$ on $(\R^n,\gamma_n)$.
\item[(3)] (Bessel setting) By taking $M=(0,\infty)$ and $\varphi(x)=\ln(x^{-2\alpha})$ for some $\alpha\geqslant 0$ we get
\[\Delta_\varphi=\frac{d^2}{dx^2}+\frac{2\alpha}{x}\frac{d}{dx}=-B_\alpha,\]
where $B_\alpha$ is the Bessel operator considered by Muckenhoupt and Stein (\cite{MS}). The measure is now $d\mu_\varphi(x)=x^{2\alpha}dx$ and $\Ric_\varphi=\frac{2\alpha}{x^2}\geqslant 0$. Hence we also retrieve estimates for the Riesz transform $R_\alpha$ associated with $B_\alpha$ defined as $R_\alpha=\frac{d}{dx}B_\alpha^{-1/2}$.

Bessel-Riesz transforms in higher dimensions were considered in \cite{BetCC}. Given a multi-index $\alpha=(\alpha_1,\alpha_2,\dots,\alpha_n)$ $(\alpha_i \geqslant 0)$ one defines the $n$-dimensional Bessel differential operator as 
\[B_\alpha=\sum_{i=1}^n \left(-\frac{\partial^2}{\partial x_i^2}-\frac{2\alpha_i}{x_i}\frac{\partial}{\partial x_i}\right)\]
and the Riesz transforms associated with $B_\alpha$ as
\[R_{\alpha,i} = \frac{\partial}{\partial x_i} (B_\alpha)^{-1/2}.\]
Now by taking $M=(0,\infty)^n$ and $\varphi(x)=\sum_{i=1}^n\ln(x^{-2\alpha_i})$ we see that again we get 
\[\Delta_\varphi=-B_\alpha \quad \text{and} \quad d\mu_\varphi(x)=x^{2\alpha}dx,\]
where $x^{2\alpha}$ denotes $x_1^{2\alpha_1}\cdots x_n^{2\alpha_n}$. Therefore we also get dimensionless estimates for the vector Riesz transform $R_\alpha$ on $(\R_+^n,x^{2\alpha}dx)$.
\end{itemize}

\end{document}